\documentclass[submission]{FPSAC2022}

\usepackage{tikz-cd}

\newtheorem{theorem}{Theorem}

\newtheorem{defn}{Definition}
\newtheorem{lem}{Lemma}
\newtheorem{proposition}{Proposition}
\newtheorem{corollary}{Corollary}
\newtheorem{example}{Example}

\newtheorem{remark}{Remark}

\newcommand{\al}{\alpha}
\newcommand{\be}{\beta}

\newcommand{\RR}{\mathbb{R}} 
\newcommand{\PP}{\mathbb{P}} 
\newcommand{\kk}{\mathbf{k}} 
\newcommand{\QSym}{\operatorname{QSym}}
\newcommand{\Des}{\operatorname{Des}}

\newcommand{\Peak}{\operatorname{Peak}}
\newcommand{\set}[1]{\left\{ #1 \right\}}

\newcommand{\tup}[1]{\left( #1 \right)}

\makeatletter
\providecommand*{\shuffle}{%
  \mathbin{\mathpalette\shuffle@{}}%
}
\newcommand*{\shuffle@}[2]{%
  \sbox0{$#1\vcenter{}$}%
  \kern .15\ht0 
  \rlap{\vrule height .25\ht0 depth 0pt width 2.5\ht0}%
  \raise.1\ht0\hbox to 2.5\ht0{%
    \vrule height 1.75\ht0 depth -.1\ht0 width .17\ht0 %
    \hfill
    \vrule height 1.75\ht0 depth -.1\ht0 width .17\ht0 %
    \hfill
    \vrule height 1.75\ht0 depth -.1\ht0 width .17\ht0 %
  }%
  \kern .15\ht0 
}
\makeatother

\title{A $q$-deformation of enriched $P$-partitions}

\author[D. Grinberg \and E.A. Vassilieva]{Darij Grinberg\thanks{\href{mailto:darijgrinberg@gmail.com}{darijgrinberg@gmail.com}}\addressmark{1}, \and Ekaterina A. Vassilieva\thanks{\href{mailto:katya@lix.polytechnique.fr}{katya@lix.polytechnique.fr}}\addressmark{2}}

\address{\addressmark{1}Department of Mathematics, Drexel University, Philadelphia, PA 19104, USA\\ \addressmark{2}Laboratoire d'Informatique de l'Ecole Polytechnique, Palaiseau, France}

\received{\today}


\abstract{We introduce a $q$-deformation that generalises in a single framework previous works on classical and enriched $P$-partitions. In particular, we build a new family of power series with a parameter $q$ that interpolates between Gessel's fundamental ($q=0$) and Stembridge's peak quasisymmetric functions ($q=1$) and show that it is a basis of $\QSym$ when $q\notin\{-1,1\}$. Furthermore we build their corresponding monomial bases parametrised with $q$ that cover our previous work on enriched monomials and the essential quasisymmetric functions of Hoffman.}

\resume{Nous introduisons une $q$-déformation qui généralise dans un cadre unique les travaux antérieurs sur les $P$-partitions classiques et enrichies. En particulier, nous construisons une famille de séries formelles avec un paramètre $q$ qui interpole entre les fonctions quasisymétriques fondamentales de Gessel ($q=0$) et les fonctions de pic de Stembridge ($q=1$) et montrons qu'il s'agit d'une base de $\QSym$ quand $q\notin\{-1,1\}$. De plus, nous construisons leur bases de monômes associées paramétrées par $q$ qui généralisent nos travaux sur les monômes enrichis et les fonctions essentielles de Hoffman.}

\keywords{Quasisymmetric functions, enriched P-partitions, peak functions.}


\usepackage[backend=bibtex]{biblatex}
\addbibresource{biblio.bib}

\begin{document}

\maketitle
\section{Introduction}
Introduced by Stanley in \cite{Sta72}, $P$-partitions are order preserving maps from a partially ordered set $P$ to the set of positive integers with many significant applications in algebraic combinatorics. In particular, they are the building block of Gessel's ring of quasisymmetric functions ($\QSym$) in  \cite{Ges84}. Replacing positive integers by signed ones, Stembridge introduces in \cite{Ste97} an enriched version of $P$-partitions to build the algebra of peaks, a  subalgebra of $\QSym$. The generating functions of classical (enriched) $P$-partitions on labelled chains are the fundamental (peak) quasisymmetric functions, an important basis of $\QSym$ (the algebra of peaks) related to the descent (peak) statistic on permutations. More recently, in \cite{GriVas21}, we redefine these generating functions on weighted posets to extend their nice properties to the monomial and enriched monomial bases of $\QSym$. However the classical and enriched frameworks remained so far separated. We merge them into one via a new $q$-deformation of the generating function for enriched $P$-partitions that interpolates between Gessel's and Stembridge's works.   
\subsection{Posets and enriched $P$-partitions}
\label{sec : poset}
We recall the main definitions regarding posets and (enriched) $P$-partitions. The reader is referred to \cite{Sta01, Ges84, Ste97} for further details.   
\begin{defn}[Labelled posets] Let $[n] = \left\{1,2,\dots, n\right\}$. A \emph{labelled poset} $P=([n],<_P)$ is an arbitrary partial order $<_P$ on the set $[n]$. 
\end{defn}
\begin{defn}[$P$-partition]\label{def : ppart}
Let $\PP=\left\{1,2,3,\dots\right\}$ and let $P = ([n],<_P)$ be a labelled poset.
A \emph{$P$-partition} is a map $f: [n]\longrightarrow \PP$ that satisfies the two following conditions:
\begin{itemize}
\item[(i)] If $i <_P j$, then $f(i) \leq f(j)$.
\item[(ii)] If $i <_P j$ and $i > j$, then $f(i) < f(j)$.
\end{itemize}
The relations $<$ and $>$ stand for the classical order on $\PP$. Let $\mathcal{L}_\PP(P)$ denote the set of $P$-partitions.
\end{defn}
\begin{defn}[Enriched $P$-partition]\label{def : enriched}
Let $\PP^{\pm}$ be the set of positive and negative integers totally ordered by $-1<1<-2<2<-3<3<\dots$. We embed $\PP$ into $\PP^{\pm}$ and let $-\PP \subseteq \PP^{\pm}$ be the set of all $-n$ for $n \in \PP$. Given a labelled poset $P = ([n],<_P)$, an \emph{enriched $P$-partition} is a map $f: [n]\longrightarrow \PP^{\pm} $ that satisfies the two following conditions:
\begin{itemize}
\item[(i)] If $i <_P j$ and $i < j$, then $f(i) < f(j)$ or $f(i) = f(j) \in \PP$.
\item[(ii)] If $i <_P j$ and $i>j$, then $f(i) < f(j)$ or $f(i) = f(j) \in -\PP$.
\end{itemize}
Further, let $\mathcal{L}_{\PP^{\pm}}(P)$ be the set of enriched $P$-partitions.
\end{defn} 
\noindent Finally recall the weighted variants of posets introduced in \cite{GriVas21}. 
\begin{defn}[\cite{GriVas21}]
A \emph{labelled weighted poset} is a triple $P = ([n],<_P,\epsilon)$ where $([n], <_P)$ is a labelled poset and $\epsilon : [n]\longrightarrow \PP$ is a map (called the \emph{weight function}).
\end{defn}
\noindent Each node of a labelled weighted poset is marked with its label and weight (Figure \ref{fig : poset}).
\begin{figure}[htbp]
\begin{center}
\begin{tikzcd}[row sep = small]
                                              &  & {2,\ \epsilon(2) = 5} &  &                                  &  &                       \\
{3,\ \epsilon(3) = 2} \arrow[rru,very thick] &  &                                   &  & {1,\ \epsilon(1) = 1} \arrow[llu,very thick] \arrow[rr,very thick] &  & {4,\ \epsilon(4) = 2} \\
                                              &  & {5,\ \epsilon(5)=2} \arrow[llu,very thick]  \arrow[rru,very thick]   &  &                                  &  &                      
\end{tikzcd}
\end{center}
\caption{A $5$-vertex labelled weighted poset. Arrows show the covering relations.}
 \label{fig : poset}
 \end{figure}
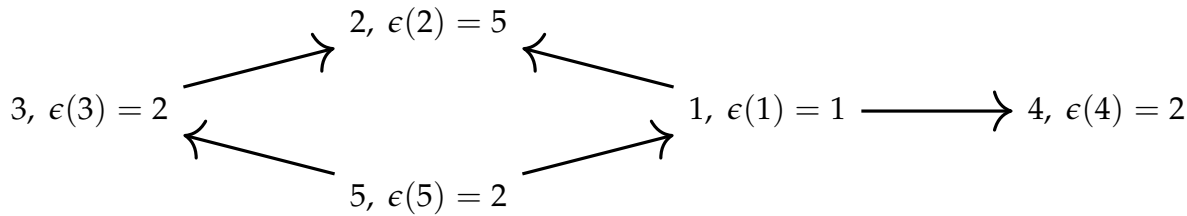
 %
\subsection{Quasisymmetric functions}
\label{sec : QSym}
Consider the set of indeterminates $X = \left\{x_1,x_2,x_3,\ldots\right\}$, the ring $\kk \left[\left[ X \right]\right]$ of formal power series on $X$ where $\kk$ is a commutative ring, and let $\mathcal{Z} \in \{\PP, \PP^{\pm}\}$. Given a labelled weighted poset $([n], <_P, \epsilon)$, define its generating function $\Gamma_\mathcal{Z}([n], <_P, \epsilon) \in \kk \left[\left[ X \right]\right]$  by
\begin{equation}
\label{eq : weightGamma}
\Gamma_\mathcal{Z}([n], <_P, \epsilon) = \sum_{f \in \mathcal{L}_\mathcal{Z}([n], <_P)}\ \ \prod_{1\leq i \leq n}x^{\epsilon(i)}_{|f(i)|},
\end{equation} 
where $|f(i)| = -f(i)$ (resp. $= f(i)$) for $f(i) \in -\PP$ (resp. $\PP$).
Let $S_n$ be the symmetric group on $[n]$. Given a \emph{composition}, i.e. a sequence of positive integers $\alpha = (\alpha_1, \alpha_2, \ldots, \alpha_n)$ with $n$ entries, and a permutation $\pi=\pi_1\dots\pi_n$ of $S_n$, we let $P_{\pi,\alpha} = ([n],<_\pi,\alpha)$ be the labelled weighted poset on the set $[n]$, where the order relation $<_\pi$ is such that $\pi_i <_\pi \pi_j$ if and only if $i < j$ and $\alpha$ is the weight function sending the vertex labelled $\pi_i$ to $\alpha_i$ (see Figure \ref{fig : monomial}). For $\mathcal{Z} \in \{\PP, \PP^{\pm}\}$, its generating function $U^\mathcal{Z}_{\pi,\alpha} = \Gamma_\mathcal{Z}([n],<_\pi, \alpha)$ is called the \emph{universal quasisymmetric function} (\cite{GriVas21}) indexed by $\pi$ and $\al$.
\begin{figure}[htbp]
\begin{center}
\begin{tikzcd}[row sep = small]
{\pi_1,\ \alpha_1} \arrow[r, very thick] &
{\pi_2,\ \alpha_2} \arrow[r, very thick] &
\cdots\cdots\cdots \arrow[r, very thick] &
{\pi_n,\ \alpha_n}
\end{tikzcd}
\end{center}
\caption{The labelled weighted poset $P_{\pi,\alpha}$.}
\label{fig : monomial}
 \end{figure}
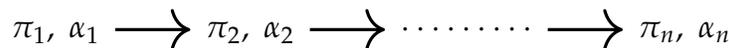
\begin{defn} 
Let $[1^n]$ denote the composition with $n$ entries equal to $1$.  For each $\pi \in S_n$, let $L_{\pi}= U^\mathcal{\PP}_{\pi,[1^n]}$ and $K_{\pi}= U^\mathcal{\PP^\pm}_{\pi,[1^n]}$. The power series $L_\pi$ (resp. $K_\pi$) are  \emph{Gessel's fundamental (resp. Stembridge's peak) quasisymmetric functions} indexed by the permutation $\pi$. 
\end{defn}
The power series $L_\pi$ and $K_\pi$ belong to the subalgebra of $\kk \left[\left[ X \right]\right]$ called the ring of \emph{quasisymmetric functions ($\QSym$)}, i.e. for any strictly increasing sequence 
of indices $i_1 < i_2 <\cdots< i_p$ the coefficient of $x_1^{k_1}x_2^{k_2}\cdots x_p^{k_p}$ is equal to the coefficient of $x_{i_1}^{k_1}x_{i_2}^{k_2}\cdots x_{i_p}^{k_p}$. Furthermore they are related to two major statistics on permutations. Given $\pi \in S_n$, define its \emph{descent set} $\Des(\pi)= \{1\leq i\leq n-1| \pi(i)>\pi(i+1)\}$ and its \emph{peak set} $\Peak(\pi) = \{2\leq i\leq n-1| \pi(i-1)<\pi(i)>\pi(i+1)\}$. The peak set of a permutation is \emph{peak-lacunar}, i.e. it neither contains $1$ nor contains two consecutive integers.
\begin{proposition}[\cite{Ges84,Ste97}] 
\label{prop : LK}
For any permutation $\pi \in S_n$, the fundamental quasisymmetric function $L_\pi$ and the peak quasisymmetric function $K_\pi$ satisfy
\begin{equation*}
L_{\pi} = \sum\limits_{\substack{i_1 \leq \cdots \leq i_n;\\j\in \Des(\pi) \Rightarrow i_j<i_{j+1}}} x_{i_1}x_{i_2} \cdots x_{i_n},~~~~~~~~~~~~~K_{\pi} = \sum\limits_{\substack{i_1 \leq \dots \leq i_n;\\j\in \Peak(\pi) \Rightarrow  i_{j-1}<i_{j+1}}}2^{|\{i_1,i_2,\dots,i_n\}|} x_{i_1}x_{i_2} \cdots x_{i_n}.
\end{equation*}
As a result $L_\pi$ ($K_\pi$) depends only on $n$ and $\Des(\pi)$ ($\Peak(\pi)$) and we may  use set indices and write $L_{n, \Des(\pi)}$ ($K_{n, \Peak(\pi)}$) instead of $L_\pi$ ($K_\pi$). Furthermore $(L_{n,I})_{n\geq0, I\subseteq[n-1]}$ is a basis of $\QSym$ (we assume $[-1] = [0] = \emptyset$), and $(K_{n,I})_{n\geq0, I}$ is a basis of a subalgebra of $\QSym$ called the algebra of peaks when $I$ runs over all peak-lacunar subsets of $[n -1]$ for all integers $n$.
\end{proposition}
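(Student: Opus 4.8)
The statement to prove is Proposition~\ref{prop : LK}, which has three distinct parts: (a) the combinatorial formulas for $L_\pi$ and $K_\pi$ as sums over weakly increasing sequences of indices; (b) the consequence that $L_\pi$ depends only on $n$ and $\Des(\pi)$ while $K_\pi$ depends only on $n$ and $\Peak(\pi)$; and (c) the basis claims for $(L_{n,I})$ in $\QSym$ and for $(K_{n,I})$ in the peak algebra. I will address these in order.

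For part (a), the plan is to unwind the definition of $U^{\mathcal{Z}}_{\pi,[1^n]} = \Gamma_{\mathcal{Z}}([n],<_\pi,[1^n])$ from equation~\eqref{eq : weightGamma}. Since all weights are $1$, the monomial attached to an (enriched) $P_{\pi,[1^n]}$-partition $f$ is simply $\prod_{i} x_{|f(i)|}$. For the fundamental case $\mathcal{Z} = \PP$: a map $f\colon [n]\to\PP$ is a $P_{\pi,[1^n]}$-partition iff $f(\pi_1)\le f(\pi_2)\le\cdots\le f(\pi_n)$ with a strict inequality $f(\pi_j) < f(\pi_{j+1})$ whenever $\pi_j > \pi_{j+1}$, i.e. whenever $j\in\Des(\pi)$. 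Setting $i_k := f(\pi_k)$ gives exactly the indexing set $\{i_1\le\cdots\le i_n : j\in\Des(\pi)\Rightarrow i_j<i_{j+1}\}$, and the monomial $\prod_i x_{f(i)} = x_{i_1}\cdots x_{i_n}$, proving the $L_\pi$ formula. For the peak case $\mathcal{Z} = \PP^{\pm}$, one reindexes $f(\pi_k)$ along the total order $-1<1<-2<2<\cdots$; the standard argument (as in Stembridge) groups enriched $P$-partitions by the underlying absolute-value sequence $(|f(\pi_1)|,\ldots,|f(\pi_n)|)$, which must be weakly increasing with the strictness condition forced exactly at peaks, and then counts the number of valid sign choices: each distinct absolute value can be realized with either sign subject to the enriched conditions, contributing the factor $2^{|\{i_1,\ldots,i_n\}|}$. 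I would cite \cite{Ste97} for the sign-counting bookkeeping rather than redo it. Part (b) is then immediate from the formulas in (a), since the right-hand sides manifestly depend only on $n$ together with $\Des(\pi)$ (resp. $\Peak(\pi)$).

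For part (c), the basis statement for $(L_{n,I})_{I\subseteq[n-1]}$ is classical: expand $L_{n,I}$ in the monomial quasisymmetric basis $(M_{n,J})$ and observe the expansion is unitriangular with respect to refinement of compositions, so invertibility over $\ZZ$ follows; alternatively one notes $L_{n,I} = \sum_{J\supseteq I} M_{n,J}$ directly from the combinatorial formula. The peak-algebra statement for $(K_{n,I})$ requires knowing that the $K_{n,I}$ with $I$ peak-lacunar span a subalgebra and are linearly independent; Stembridge establishes both, the independence again via a triangularity argument (e.g. against the monomial peak functions, or via the expansion $K_{n,I}$ in terms of $L$'s), so I would invoke \cite{Ste97} here as well.

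The main obstacle is the sign-counting in the peak case of part (a): one must verify carefully that, given a weakly increasing absolute-value sequence satisfying the peak strictness condition, the number of sign assignments producing a valid enriched $P_{\pi,[1^n]}$-partition is exactly $2^{(\text{number of distinct values})}$, independently of where the repeats and the peaks sit. The cleanest route is to process the chain $\pi_1 <_\pi \pi_2 <_\pi \cdots <_\pi \pi_n$ left to right: within a block of equal absolute values, conditions (i)--(ii) of Definition~\ref{def : enriched} force all entries to share one sign (positive if the step into the block is an ascent of $\pi$, negative if it is a descent), so each block contributes exactly one binary choice, and the blocks are independent — giving $2^{\#\text{blocks}} = 2^{|\{i_1,\ldots,i_n\}|}$. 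Everything else is bookkeeping, and the quasisymmetry of $L_\pi,K_\pi$ as well as all of part (c) can be quoted from \cite{Ges84,Ste97}.
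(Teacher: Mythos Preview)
The paper does not prove Proposition~\ref{prop : LK} directly; it is quoted from \cite{Ges84,Ste97}. The closest in-paper argument is the proof of the $q$-deformed Proposition~\ref{prop : Uqq}, which specialises to the $K_\pi$ formula at $q=1$. Your treatment of $L_\pi$ and of parts (b) and (c) is fine, but your sign-counting for $K_\pi$ contains a genuine error.

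You write that within a block of equal absolute values ``conditions (i)--(ii) \ldots\ force all entries to share one sign (positive if the step into the block is an ascent of $\pi$, negative if it is a descent).'' This is false. Take $\pi = 213$ and a single block $i_1=i_2=i_3=a$: condition (ii) on the descent $\pi_1>\pi_2$ forces $f(\pi_1)=-a$, condition (i) on the ascent $\pi_2<\pi_3$ forces $f(\pi_3)=+a$, while $f(\pi_2)$ is the free choice in $\{-a,+a\}$. The signs are \emph{not} all equal; they run $-,\ldots,-,\,?,\,+,\ldots,+$. Your parenthetical also conflicts with your own claim that each block contributes a binary choice: if the sign were determined by the entering step, there would be no choice at all.

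The correct mechanism, which is exactly what the paper uses in proving Proposition~\ref{prop : Uqq}, is this: because no peak of $\pi$ can lie in the interior of a constant block, the block has valley shape $\pi_j>\cdots>\pi_l<\cdots<\pi_k$; then the descents force $f(\pi_j)=\cdots=f(\pi_{l-1})=-a$, the ascents force $f(\pi_{l+1})=\cdots=f(\pi_k)=+a$, and the only freedom is $f(\pi_l)\in\{-a,+a\}$. That single binary choice per block yields the factor $2^{|\{i_1,\ldots,i_n\}|}$. Replace your sentence with this valley-shape analysis and the argument goes through.
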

\begin{defn}
\label{def : MEE}
Let $id_{n}$ and $\overline{id_{n}}$ denote the permutations in $S_n$ given by  $id_{n} = 1~2~3\dots n$ and $\overline{id_{n}} = n~n-1\dots 1$. Given a composition $\al = (\al_1, \dots, \al_n)$ of $n$ entries, define the \emph{monomial} $M_\al$ (\cite{Ges84}), \emph{essential} $E_\al$ (\cite{Hof15}) and \emph{enriched monomial} $\eta_\al$ (\cite{Hsi07, GriVas21}) quasisymmetric functions
\begin{gather*}
M_{\alpha} = U^{\mathcal{\PP}}_{\overline{id_{n}},\alpha} = \sum_{i_1<\dots<i_n}x_{i_1}^{\al_1}\dots x_{i_n}^{\al_n},
\qquad
E_{\alpha} = U^{\mathcal{\PP}}_{id_{n},\alpha} = \sum_{i_1\leq\dots\leq i_n}x_{i_1}^{\al_1}\dots x_{i_n}^{\al_n},\\
\eta_{\alpha} = U^{\mathcal{\PP^\pm}}_{id_{n},\alpha}=\sum_{i_1\leq\dots\leq i_n}2^{|\{i_1,\dots,i_n\}|}x_{i_1}^{\al_1}\dots x_{i_n}^{\al_n}.
\end{gather*}
\end{defn} 
Compositions $\al = (\al_1,\dots,\al_n)$ such that $\al_1 + \dots + \al_n = s$ are in bijection with subsets of $[s-1]$. For $I \subseteq [s-1]$, we also use the following alternative indexing for monomial, essential and enriched monomials. References to $s$ in indices are removed for clarity. 
\begin{equation*}
M_I = \sum_{\substack{i_1\leq\dots\leq i_s\\ j \in I \Leftrightarrow i_j=i_{j+1}}}x_{i_1}\dots x_{i_s},~~~
E_I = \sum_{\substack{i_1\leq\dots\leq i_s\\ j \in I \Rightarrow i_j=i_{j+1}}}x_{i_1}\dots x_{i_s},~~~
\eta_{I} =\sum_{\substack{i_1\leq\dots\leq i_s\\ j \in I \Rightarrow i_j=i_{j+1}}}2^{|\{i_1,\dots,i_s\}|}x_{i_1}\dots x_{i_s}.
\end{equation*}
\begin{proposition}
\label{prop : LKEE}
Let $s \geq 0$. Let $I$ and $J$ be a subset and a peak-lacunar subset of $[s-1]$. Then,
\begin{equation}
L_{I} = \sum\limits_{U \subseteq I} (-1)^{|U|} E_U,~~~~~~~~~~~~~K_{J} = \sum\limits_{V \subseteq J}(-1)^{|V|} \eta_{(V-1) \cup V},
\end{equation}
where for $V$ peak-lacunar, we set $V-1 = \{v-1| v \in V\}$.
\end{proposition}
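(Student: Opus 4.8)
The plan is to establish both identities by the same mechanism: each of $L_I$, $E_U$, $K_J$, $\eta_V$ is a sum over weakly increasing sequences $i_1 \le \cdots \le i_s$ of (possibly weighted) monomials, subject to constraints on which consecutive pairs are allowed to be equal; so the identities should follow from a pointwise comparison of coefficients, i.e. from an inclusion–exclusion over the set of positions where equality actually occurs. Concretely, for a fixed weakly increasing word $i_1 \le \cdots \le i_s$, let $S = \{ j \in [s-1] : i_j = i_{j+1} \}$ be its set of ``equality positions.'' Then $M_S$ collects exactly the words with equality set $S$, while $E_U = \sum_{U \subseteq S} M_S$ (summing over all $S \supseteq U$), because $E_U$ imposes $i_j = i_{j+1}$ for $j \in U$ but nothing else. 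Similarly $L_I$ is, by Proposition~\ref{prop : LK}, the sum of $M_S$ over all $S$ disjoint from $I$ (a descent at $j$ forces $i_j < i_{j+1}$, and no descent imposes nothing), i.e. $L_I = \sum_{S \cap I = \emptyset} M_S$.

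For the first identity I would then just compute, using $E_U = \sum_{S \supseteq U} M_S$:
\[
\sum_{U \subseteq I} (-1)^{|U|} E_U
= \sum_{U \subseteq I} (-1)^{|U|} \sum_{S \supseteq U} M_S
= \sum_{S} M_S \sum_{\substack{U \subseteq I \\ U \subseteq S}} (-1)^{|U|}
= \sum_{S} M_S \sum_{U \subseteq I \cap S} (-1)^{|U|}.
\]
The inner sum is $[\,I \cap S = \emptyset\,]$ by the standard alternating-sum-over-a-finite-set identity, so the whole thing collapses to $\sum_{S \cap I = \emptyset} M_S = L_I$, as desired. (One should note this is really an identity of formal power series, so ``coefficient of a fixed monomial'' talk can be replaced by this clean manipulation of the indexing sets, with all sums finite once a degree is fixed.)

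The second identity runs in parallel but with the peak/enriched bookkeeping, and this is where the main care is needed. By Proposition~\ref{prop : LK}, $K_J = \sum 2^{|\{i_1,\dots,i_s\}|} x_{i_1}\cdots x_{i_s}$ over weakly increasing words such that $j \in J \Rightarrow i_{j-1} < i_{j+1}$; for a peak-lacunar $J$ this condition on the equality set $S$ of the word is exactly that $J \cap \bigl( (S-1) \cup S \bigr) = \emptyset$ — i.e. for each peak $j \in J$, neither $i_{j-1}=i_j$ nor $i_j = i_{j+1}$. Meanwhile $\eta_W$ (in the set indexing) is $\sum 2^{|\{\dots\}|}$ over words with equality set $S \supseteq W$. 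So writing $W = (V-1) \cup V$ for peak-lacunar $V$, and grouping by the true equality set $S$, we get
\[
\sum_{V \subseteq J} (-1)^{|V|} \eta_{(V-1)\cup V}
= \sum_{S} 2^{|\{i_1,\dots,i_s\}|}(\cdots) \sum_{\substack{V \subseteq J \\ (V-1)\cup V \subseteq S}} (-1)^{|V|}.
\]
The crux is the identity
\[
\sum_{\substack{V \subseteq J \\ (V-1)\cup V \subseteq S}} (-1)^{|V|}
= \bigl[\, J \cap \bigl((S-1)\cup S\bigr) = \emptyset \,\bigr],
\]
which I expect to be the one genuinely nontrivial step. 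The point is that the condition ``$(V-1)\cup V \subseteq S$'' on a peak-lacunar $V \subseteq J$ is equivalent — because $V$ is peak-lacunar, so $V$ and $V-1$ are disjoint and the map $v \mapsto \{v-1,v\}$ is ``independent'' across distinct $v$ — to ``$V \subseteq J \cap \{\, j : j-1 \in S \text{ and } j \in S\,\}$.'' Hence the sum is an unrestricted alternating sum over subsets of a fixed set $T := J \cap \{ j : \{j-1,j\} \subseteq S\}$, giving $[\,T = \emptyset\,]$; and $T = \emptyset$ is readily seen to be equivalent to $J \cap ((S-1)\cup S) = \emptyset$ given that $J$ is peak-lacunar and $S$ is the equality set of a weakly increasing word (so one cannot have $i_{j-1}<i_j=i_{j+1}$ and a peak at $j$ without $j$ landing in $(S-1)\cup S$, etc.). Plugging this back in collapses the alternating sum to $\sum_{S : J \cap ((S-1)\cup S)=\emptyset} 2^{|\{i_1,\dots,i_s\}|}x_{i_1}\cdots x_{i_s} = K_J$.

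The only real obstacle, then, is verifying that the combinatorial translation ``$(V-1)\cup V \subseteq S \iff V \subseteq T$'' holds for peak-lacunar $V$ — this uses peak-lacunarity to guarantee that $v-1, v$ are never forced to overlap between different elements $v, v'$ of $V$, so that membership of $v$ in $T$ is decided independently. I would spell this out as a short lemma. Everything else is the two alternating-sum collapses above, which are routine. A cleaner alternative, which I would mention, is to derive the second identity from the first by the known expansion of peak functions in terms of fundamentals together with the expansion of $\eta$ in terms of $E$ (both available from \cite{GriVas21}); but the direct coefficient comparison above is self-contained and matches the level of the paper.
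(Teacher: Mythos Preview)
The paper does not prove this proposition directly; it is stated as background, and its $q$-generalisation (Theorem~\ref{thm : UE} and the ensuing Equation~\eqref{eq : LEq}) is proved later by expanding $U^q_{\pi,\alpha}$ directly. Your first identity is fine: the inclusion--exclusion via $E_U=\sum_{S\supseteq U}M_S$ and $L_I=\sum_{S\cap I=\emptyset}M_S$ is the standard argument and goes through verbatim.

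There is a genuine gap in your second identity, however. You assert that the condition ``$j\in J\Rightarrow i_{j-1}<i_{j+1}$'' on a weakly increasing word with equality set $S$ is $J\cap\bigl((S-1)\cup S\bigr)=\emptyset$, glossed as ``neither $i_{j-1}=i_j$ nor $i_j=i_{j+1}$.'' This is false: for weakly increasing indices, $i_{j-1}<i_{j+1}$ fails exactly when $i_{j-1}=i_j=i_{j+1}$, i.e.\ when \emph{both} $j-1\in S$ and $j\in S$. The correct translation is therefore $T=\emptyset$, where $T:=\{j\in J:\{j-1,j\}\subseteq S\}$ --- precisely the set you introduce later. (Concretely, take $s=4$, $J=\{2\}$, $S=\{2\}$: then $T=\emptyset$ and the word does contribute to $K_J$, yet $J\cap((S-1)\cup S)=\{2\}\neq\emptyset$.) You then compute the inner alternating sum correctly as $[T=\emptyset]$, but try to close the loop by asserting ``$T=\emptyset$ is readily seen to be equivalent to $J\cap((S-1)\cup S)=\emptyset$,'' which is again false. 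The two errors cancel, but as written the argument is incorrect.

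The fix is simply to drop the $(S-1)\cup S$ detour entirely: once you observe that $(V-1)\cup V\subseteq S$ is equivalent to $V\subseteq T$ (which is immediate and does not even need peak-lacunarity of $V$), the inner sum is $\sum_{V\subseteq T}(-1)^{|V|}=[T=\emptyset]$, and $T=\emptyset$ is exactly the $K_J$ condition. That is the whole proof.
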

\section{A $q$-deformed generating function for $P$-partitions}
Equation (\ref{eq : weightGamma}) and Propositions \ref{prop : LK} and \ref{prop : LKEE} exhibit the strong similarities between enriched and classical $P$-partitions. As we will see, both are special cases of a more general theory. Looking at Equation (\ref{eq : weightGamma}), one may notice that the generating function does not depend on the sign of $f(i)$. Let $\omega$ be the map that sends the element $i$ of a labelled weighted poset $([n],<_{P},\epsilon)$ and an enriched P-partition $f$ to the contributing monomial in $\Gamma$. That is, $\omega(i,f)=x_{|f(i)|}^{\epsilon(i)}$.
As proposed by Stembridge, the value of $\omega$ does not depend on the sign of $f$. We break this
assumption and write for an additional parameter $q$:
\[
\omega(i,f,q)=x_{f(i)}^{\epsilon(i)}\mbox{ if }f(i) \in \PP,~~~~~~
\omega(i,f,q)=qx_{-f(i)}^{\epsilon(i)}\mbox{ if }f(i) \in -\PP.
\]
\begin{defn}
Let $q \in \mathbf{k}$ (the base ring of the power series). The \emph{$q$-generating function} for enriched $P$-partitions on the weighted poset $([n],<_{P},\epsilon)$ is
\begin{equation}
\Gamma_{q}([n],<_{P},\epsilon)=\!\!\!\!\!\!\! \sum_{f\in\mathcal{L}_{\PP^\pm}([n],<_{P})}\prod_{1\leq i\leq n}\omega(i,f,q) =\!\!\!\!\!\!\!\sum_{f\in\mathcal{L}_{\PP^\pm}([n],<_{P})} \prod_{1\leq i\leq n}q^{[f(i)<0]}x_{|f(i)|}^{\epsilon(i)},
\end{equation}
where $[f(i)<0] = 1$ if $f(i)<0$ and $0$ otherwise.
\end{defn}
This definition covers the case of Gessel ($q=0$) with no negative numbers
allowed and the one of Stembridge ($q=1$) where the sign of $f$ is ignored in
the generating function. Define also the \emph{$q$-universal quasisymmetric function}
\begin{equation}
U^q_{\pi,\alpha} = \Gamma_q([n],<_\pi, \alpha).
\end{equation}
\begin{proposition}
\label{prop : Uqq}
Let $q \in \mathbf{k}$, $\pi \in S_n$ and $\alpha = (\alpha_1, \alpha_2, \ldots, \alpha_n)$ be a composition with $n$ entries. Then,
\begin{equation}
U_{\pi,\alpha}^{q}=\sum_{\substack{i_{1}\leq i_{2}\leq
\dots\leq i_{n};\\j\in\Peak(\pi)\Rightarrow 
i_{j-1}<i_{j+1}  }}q^{|\{j\in\Des(\pi
)|i_{j}=i_{j+1}\}|}(q+1)^{|\{i_{1},i_{2},\dots, i_{n}\}|}x_{i_{1}}^{\alpha_{1}%
}x_{i_{2}}^{\alpha_{2}}\dots x_{i_{n}}^{\alpha_{n}}.
\label{eq : Uqq}
\end{equation}
\end{proposition}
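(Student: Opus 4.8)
The goal is to unfold the definition of $\Gamma_q([n],<_\pi,\alpha)$ and match each enriched $(\pi,\alpha)$-partition to a weakly increasing sequence $i_1\le\cdots\le i_n$ equipped with the correct power of $q$ and $q+1$. First I would recall that an enriched $P$-partition $f$ for the chain poset $P_{\pi,\alpha}$ is, by Definition~\ref{def : enriched}, a map $f\colon[n]\to\PP^\pm$ subject to conditions governed by the total order $\pi_1<_\pi\pi_2<_\pi\cdots<_\pi\pi_n$ and the natural order on the labels. It is convenient to reindex: set $g(k)=f(\pi_k)\in\PP^\pm$, so that $g$ runs along the chain. The condition ``$\pi_k<_\pi\pi_{k+1}$ and $\pi_k<\pi_{k+1}$'' (i.e.\ $k\notin\Des(\pi)$) forces $g(k)<g(k+1)$ or $g(k)=g(k+1)\in\PP$; the condition ``$\pi_k<_\pi\pi_{k+1}$ and $\pi_k>\pi_{k+1}$'' (i.e.\ $k\in\Des(\pi)$) forces $g(k)<g(k+1)$ or $g(k)=g(k+1)\in-\PP$. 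Thus an enriched $(\pi,\alpha)$-partition is exactly a weakly $\PP^\pm$-increasing word $g(1)\preceq g(2)\preceq\cdots\preceq g(n)$ (in the order $-1<1<-2<2<\cdots$) such that an equality $g(k)=g(k+1)$ at a non-descent must have positive sign and at a descent must have negative sign, and the monomial contributed is $\prod_k q^{[g(k)<0]}x_{|g(k)|}^{\alpha_k}$.

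Next I would pass from the $\PP^\pm$-word $g$ to the pair (absolute-value word $i_k=|g(k)|$, sign pattern). Since $x_{-m}$ and $x_m$ both read as $x_m$, the $x$-part of the monomial is determined by $(i_1,\dots,i_n)$, which is weakly increasing in $\PP$ because $m_1\preceq m_2$ in $\PP^\pm$ descends to $|m_1|\le|m_2|$ in $\PP$. So I will fix a weakly increasing integer word $i_1\le\cdots\le i_n$ and count, with the weight $q^{\#\{k:\,g(k)<0\}}$, the number of sign choices turning it into a legal $g$. Group the positions into maximal runs of equal $i$-value; the distinct values appearing are $\{i_1,\dots,i_n\}$, and there are $|\{i_1,\dots,i_n\}|$ runs. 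Within a single run the $\PP^\pm$-order on elements of equal absolute value is $-m<m$, so the only legal patterns are: all negative, then all positive (a run looks like $-m\preceq\cdots\preceq-m\preceq m\preceq\cdots\preceq m$), i.e.\ a single ``switch point'' from sign $-$ to sign $+$, possibly at either extreme. The constraints from $\Des(\pi)$ now pin down exactly which internal equalities $g(k)=g(k+1)$ are allowed to be both-$+$ versus both-$-$: at a position $k$ interior to a run, if $k\in\Des(\pi)$ the equal pair must be negative (so the switch point is strictly to the right of $k$), and if $k\notin\Des(\pi)$ it must be positive (switch point at or left of $k$). Hence within a run the switch point is confined to an interval of consecutive admissible slots, and — this is the combinatorial heart — the number of admissible slots in a run equals $1+\#\{\text{descents strictly inside the run where the value repeats}\}$ wait, more precisely one gets a \emph{product} over runs, and each run of length $\ell$ with a descents among its $\ell-1$ internal steps contributes a factor $\sum$ of $q$-powers which, after summing over the $\le$-many valid switch positions, telescopes to $(q+1)\,q^{(\text{number of descent-steps inside that run})}$.

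Carrying this out: summing $q^{\#\text{negatives}}$ over switch positions in one run, the negatives are the prefix before the switch; as the switch moves right across an admissible step, the negative-count increases by the length of the block crossed, and the admissibility window is bounded below by (one past) the last interior descent of the run and above by the first interior non-descent after it — but since $\Peak(\pi)$-avoidance of the $i$-word is precisely the hypothesis $j\in\Peak(\pi)\Rightarrow i_{j-1}<i_{j+1}$, no run can contain two adjacent steps of opposite descent-type straddling a peak, which guarantees the descent-steps inside a run form a prefix of that run's steps. Consequently the switch window is an interval whose geometric sum over $q$ gives $q^{d}(1+q)$ where $d=\#\{k\text{ interior to the run}:\,k\in\Des(\pi)\}$; and across all runs these $d$'s sum to $|\{j\in\Des(\pi):i_j=i_{j+1}\}|$ while the $(1+q)$'s multiply to $(1+q)^{|\{i_1,\dots,i_n\}|}$. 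Multiplying by the common $x$-monomial $x_{i_1}^{\alpha_1}\cdots x_{i_n}^{\alpha_n}$ and summing over all $\Peak(\pi)$-avoiding weakly increasing words yields exactly \eqref{eq : Uqq}.

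**Main obstacle.** The delicate point is the interplay between the per-run sign constraints coming from $\Des(\pi)$ and the global $\Peak(\pi)$-avoidance condition on the word $i_1\le\cdots\le i_n$: I must show that whenever a run (a maximal block of equal $i$-values) spans positions $k,k+1,\dots$, the set of interior steps lying in $\Des(\pi)$ is a \emph{prefix} of the run's interior steps (otherwise the switch-point window would be empty or disconnected, and the clean factorization would fail). This is exactly where peak-lacunarity of the admissible words enters — a descent step followed later in the same run by a non-descent step would force $i_{j-1}<i_j=i_{j+1}$ impossible, or create a peak $j$ with $i_{j-1}=i_{j+1}$ forbidden — so I expect the bulk of the write-up to be a careful case analysis of two consecutive steps inside a run (descent-then-ascent, ascent-then-descent) reconciling Definition~\ref{def : enriched}(i)--(ii) with the peak condition. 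Once that structural lemma is in place, the geometric-series computation per run and the product-over-runs bookkeeping are routine.
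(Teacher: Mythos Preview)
Your approach is essentially identical to the paper's: both fix the absolute-value word $i_k=|f(\pi_k)|$, break it into maximal constant runs, observe that within each run the descent steps must precede the ascent steps (equivalently, no peak of $\pi$ lies strictly inside a run), deduce that the sign pattern in a run is forced except for one binary choice at the valley position $l$, and read off the factor $(q+1)q^{l-j}$ with $l-j=\#\{m\in\Des(\pi):i_m=i_{m+1}=a\}$. The paper dispatches your ``main obstacle'' in a single line: from the enriched $P$-partition constraints it follows that an interior peak in a run is impossible (ascent at $p-1$ forces $g(p)\in\PP$, descent at $p$ forces $g(p)\in-\PP$), hence the restriction of $\pi$ to the run has the valley shape $\pi_j>\cdots>\pi_l<\cdots<\pi_k$, which is exactly your ``descents form a prefix'' claim. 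One small slip in your write-up: the forbidden pattern inside a run is an \emph{ascent} step followed by a \emph{descent} step (that is what produces a peak), not the reverse as you wrote in the final paragraph.
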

\begin{proof}
Let $([n],<_\pi, \alpha)$ be the weighted chain poset associated to $\pi \in S_n$ and to the composition $\al$ with $n$ entries.
Consider an enriched $P$-partition $f \in\mathcal{L}_{\PP^\pm}([n],<_\pi)$ and an $a \in \PP$. All the $i \in [n]$ satisfying $|f(\pi_i)| = a$ form an interval $[j, k] = \{j, j+1, \ldots, k\}$ for some positive integers $j$ and $k$.
By Definition \ref{def : enriched}, we have $[j, k]\cap \Peak(\pi) = \emptyset$. As a result, there exists $l$ such that $\pi_j > \dots > \pi_l<\dots<\pi_k$.
We have $f(\pi_j) = \dots = f(\pi_{l-1}) = -a$, $f(\pi_{l+1}) = \dots = f(\pi_{k}) = a$ and $f(\pi_l) \in \{-a, a\}$. The two contributions in $x_a$ are $$x_a^{\al_j+\al_{j+1}+\cdots+\al_k}[q^{l-j} + q^{l-j+1}] = (q+1)q^{l-j}x_a^{\al_j+\al_{j+1}+\cdots+\al_k}.$$
Note that $l-j = \{i \in \Des(\pi)| |f(\pi_i)| = a\}$ to complete the proof. 
\end{proof}
The nice formula for the product of two generating functions of chain posets extends naturally to this $q$-deformation. Recall the definition of coshuffle from \cite{GriVas21}:
\begin{defn}
Let $\pi \in S_n$ and $\sigma \in S_m$ be two permutations. Let $\alpha$ and $\beta$ be two compositions with $n$ and $m$ entries, respectively. The \emph{coshuffle} of $(\pi,\alpha)$ and $(\sigma,\beta)$, denoted $(\pi,\alpha) \shuffle (\sigma,\beta)$, is the set of pairs $(\tau,\gamma)$ where 
\begin{itemize}
\item $\tau \in S_{n+m}$ is a shuffle of $\pi$ and $n+\sigma = (n+\sigma_1, n+\sigma_2, \ldots, n+\sigma_m)$, and 
\item $\gamma$ is a composition with $n+m$ entries, obtained by shuffling the entries of $\alpha$ and $\beta$ using the \emph{same shuffle} used to build $\tau$ from the letters of $\pi$ and $n+\sigma$. 
\end{itemize}
\begin{example}
$(1\mathbf{3} 2, (2, \mathbf{1}, 2))$ is a coshuffle of $(12,(2,2))$ and $(1,(1))$.
\end{example}
\end{defn}
\begin{proposition}
\label{prop : UUU}
Let $q \in \mathbf{k}$, let $\pi$ and $\sigma$ be two permutations in $S_n$ and $S_m$, and let $\alpha = (\al_1,\dots,\al_n)$ and $\beta = (\be_1,\dots,\be_m)$ be two compositions with $n$ and $m$ entries.
The product of two $q$-universal quasisymmetric functions is given by
\begin{equation}
\label{eq : UU}
U^q_{\pi,\alpha}U^q_{\sigma,\beta}=\sum_{(\tau,\gamma)\in(\pi,\alpha)\shuffle(\sigma,\beta)}U^q_{\tau,\gamma} .
\end{equation}
\end{proposition}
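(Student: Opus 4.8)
The plan is to prove Proposition~\ref{prop : UUU} by a direct bijective argument on enriched $P$-partitions, mirroring the classical proof for the shuffle product of fundamental quasisymmetric functions but keeping track of the extra $q$-weight. Recall that $U^q_{\pi,\alpha}U^q_{\sigma,\beta} = \Gamma_q([n],<_\pi,\alpha)\cdot\Gamma_q([m],<_\sigma,\beta)$, so by expanding both generating functions the product is a sum over pairs $(f,g)$ where $f\in\mathcal{L}_{\PP^\pm}([n],<_\pi)$ and $g\in\mathcal{L}_{\PP^\pm}([m],<_\sigma)$, of $\prod_i \omega(i,f,q)\cdot\prod_j\omega(j,g,q)$. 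The goal is to reorganise this double sum as $\sum_{(\tau,\gamma)}\Gamma_q([n+m],<_\tau,\gamma)$.

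First I would set up the bijection. Given $f$ and $g$, form the combined map $h\colon[n+m]\to\PP^\pm$ by $h(i)=f(i)$ for $i\le n$ and $h(n+j)=g(j)$ for $1\le j\le m$; this uses the relabelling convention already built into the coshuffle (entries of $\sigma$ shifted by $n$). The pair $(f,g)$ does \emph{not} by itself determine which chain poset $<_\tau$ it is an enriched $P$-partition for — one must choose how the two chains interleave. The key observation is that the values $|h(1)|,\dots,|h(n+m)|$ (with ties broken according to sign in the total order $-1<1<-2<2<\cdots$ on $\PP^\pm$, and further broken by whether the index comes from the $f$-block or the $g$-block) induce a well-defined total preorder on $[n+m]$; choosing a linear extension of this that is consistent on each block amounts to choosing a shuffle $\tau$ of $\pi$ and $n+\sigma$, and simultaneously the induced shuffle $\gamma$ of $\alpha$ and $\beta$. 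Conversely, given $(\tau,\gamma)\in(\pi,\alpha)\shuffle(\sigma,\beta)$ and $h\in\mathcal{L}_{\PP^\pm}([n+m],<_\tau)$, restricting $h$ to the first $n$ and last $m$ letters recovers a valid pair $(f,g)$. I would verify carefully that $h$ is an enriched $P$-partition for $<_\tau$ if and only if $f,g$ are enriched $P$-partitions for $<_\pi,<_\sigma$ respectively \emph{and} the weak/strict inequalities forced at the ``seams'' where an $f$-letter is adjacent to a $g$-letter in $\tau$ are respected — but since $\tau$ ranges over \emph{all} shuffles, every way of resolving a tie between an $f$-value and a $g$-value is accounted for exactly once, which is precisely the combinatorial content of the shuffle. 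This is the standard mechanism from \cite{Ges84} and \cite{GriVas21}; the only novelty is bookkeeping.

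The second step is to check that the $q$-weight is preserved: $\prod_{i=1}^{n+m} q^{[h(i)<0]}x_{|h(i)|}^{\gamma_i}$ equals $\prod_{i=1}^n q^{[f(i)<0]}x_{|f(i)|}^{\alpha_i}\cdot\prod_{j=1}^m q^{[g(j)<0]}x_{|g(j)|}^{\beta_j}$. This is immediate once the bijection is in place, because $h$ takes the same multiset of signed values as $f$ and $g$ together, and $\gamma$ is the shuffle of $\alpha$ and $\beta$ along the same interleaving, so the exponents match term by term and the number of negative values is simply additive. Hence every monomial on the left-hand side of \eqref{eq : UU} is matched with exactly one monomial on the right-hand side with the same coefficient, and summing over all $(\tau,\gamma)$ in the coshuffle collects each $(f,g)$ exactly once.

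Alternatively — and this may be the cleaner route to write up — I would prove the identity at the level of the formula \eqref{eq : Uqq} from Proposition~\ref{prop : Uqq}: expand $U^q_{\pi,\alpha}$ and $U^q_{\sigma,\beta}$ using that closed form, multiply, and match the monomial $x_{i_1}^{\gamma_1}\cdots x_{i_{n+m}}^{\gamma_{n+m}}$ coefficients against the sum over coshuffles. The peak condition ``$j\in\Peak(\tau)\Rightarrow i_{j-1}<i_{j+1}$'' for a shuffle $\tau$ and the descent-tie exponent ``$q^{|\{j\in\Des(\tau)\,:\,i_j=i_{j+1}\}|}$'' both decompose along the shuffle in exactly the way needed, once one knows how $\Des$ and $\Peak$ of a shuffle relate to those of the two shuffled words together with the seam positions; the factor $(q+1)^{|\{i_1,\dots,i_{n+m}\}|}$ is handled by the same inclusion–exclusion bookkeeping that appears in the classical case. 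I expect the main obstacle to be exactly this: verifying that the seam contributions — the places where a letter of $\pi$ meets a (shifted) letter of $\sigma$ in $\tau$ — produce no spurious or missing $q$-factors or peak constraints, i.e. that a tie $i_j=i_{j+1}$ straddling the seam is correctly an ``allowed equality'' with the right power of $q$. Since $q=0$ recovers Gessel's shuffle product for $L_\pi$ and $q=1$ recovers Stembridge's for $K_\pi$, both already in the literature, the $q$-deformed statement should follow from the same argument carried out one parameter at a time; I would make this precise rather than invoke it, but it is a reassuring sanity check on the seam analysis.
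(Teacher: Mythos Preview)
Your first approach is correct and is exactly what the paper does: the paper's entire proof is the single line ``similar to \cite[Thm.~3]{GriVas21}'', i.e.\ the standard bijection showing that an enriched $P$-partition on the disjoint union of the two chains (equivalently, a pair $(f,g)$) lies in $\mathcal{L}_{\PP^\pm}([n+m],<_\tau)$ for a \emph{unique} shuffle $\tau$, together with the trivial observation that the $q$-weight $\prod q^{[h(i)<0]}x_{|h(i)|}^{\gamma_i}$ factors over the two blocks. One small wording fix: your tie-breaking rule ``further broken by whether the index comes from the $f$-block or the $g$-block'' should specify the direction---at a seam with equal values the $\pi$-letter precedes the $(n+\sigma)$-letter when the common value lies in $\PP$ and follows it when it lies in $-\PP$ (this is forced by Definition~\ref{def : enriched} since every $\pi$-label is smaller than every $(n+\sigma)$-label), which is precisely what makes the shuffle $\tau$ unique rather than a choice.
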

\begin{proof}
The proof is similar to \cite[Thm. 3]{GriVas21}.
\end{proof}
\section{Enriched $q$-monomials}
\subsection{Definition, relation to $q$-universal quasisymmetric functions and product formula}
We introduce a new basis of $\QSym$ that generalises the essential and enriched monomial quasisymmetric functions in Definition \ref{def : MEE}. 
\begin{defn}[Enriched $q$-monomials]
\label{def : EQ}
Let $q \in \mathbf{k}$ and $\al$ be a composition with $n$ entries. The \emph{enriched $q$-monomial} indexed by $\al$ is defined as
\begin{equation}
\label{eq : EU}
\eta^{(q)}_\al = U^q_{id_n,\alpha}.
\end{equation}
\end{defn}
As an immediate consequence of Definition \ref{def : EQ}, one has $\eta^{(0)}_\al = E_\al$ and $\eta^{(1)}_\al = \eta_\al$. 
\begin{proposition}
Let $q \in \kk$, and let $\al = \left(\al_1, \al_2, \ldots, \al_n\right)$ be a composition with $n$ entries. Then,
\begin{equation}
\eta^{(q)}_\al = \sum_{i_1\leq i_2 \leq \dots \leq i_n}(q+1)^{|\{i_{1},i_{2},\dots, i_{n}\}|}x_{i_{1}}^{\alpha_{1}}x_{i_{2}}^{\alpha_{2}}\dots x_{i_{n}}^{\alpha_{n}}.
\end{equation}
\end{proposition}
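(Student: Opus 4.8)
The statement is simply the specialization of Proposition~\ref{prop : Uqq} (Equation~\eqref{eq : Uqq}) to the permutation $\pi = id_n$, so the plan is to instantiate that formula and simplify. First I would set $\pi = id_n$ in \eqref{eq : Uqq}. Since $id_n$ is increasing, it has no descents, so $\Des(id_n) = \emptyset$; consequently the exponent $\left|\{j \in \Des(\pi) \mid i_j = i_{j+1}\}\right|$ in the factor $q^{|\{\cdots\}|}$ is $0$, and that factor disappears. Likewise $\Peak(id_n) = \emptyset$, so the summation condition ``$j \in \Peak(\pi) \Rightarrow i_{j-1} < i_{j+1}$'' is vacuous, and the sum ranges over all weakly increasing sequences $i_1 \leq i_2 \leq \dots \leq i_n$. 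What remains is exactly
\[
U^q_{id_n,\alpha} = \sum_{i_1 \leq i_2 \leq \dots \leq i_n} (q+1)^{\left|\{i_1, i_2, \dots, i_n\}\right|} x_{i_1}^{\alpha_1} x_{i_2}^{\alpha_2} \dots x_{i_n}^{\alpha_n},
\]
and by Definition~\ref{def : EQ} the left-hand side is $\eta^{(q)}_\alpha$, which is the claimed identity.

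For completeness I would also recall briefly \emph{why} Proposition~\ref{prop : Uqq} applies: its hypotheses ask only for $q \in \mathbf{k}$, a permutation $\pi \in S_n$, and a composition $\alpha$ with $n$ entries, all of which hold here with $\pi = id_n$. No further hypothesis (such as $q \notin \{-1,1\}$) is needed, since we are not claiming a basis property at this stage, merely the monomial expansion.

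There is essentially no obstacle here; the only things to get right are the two bookkeeping observations that $\Des(id_n)$ and $\Peak(id_n)$ are both empty, and noting that as a sanity check the formula correctly recovers $\eta^{(0)}_\alpha = \sum_{i_1 \leq \dots \leq i_n} x_{i_1}^{\alpha_1} \cdots x_{i_n}^{\alpha_n} = E_\alpha$ and $\eta^{(1)}_\alpha = \sum_{i_1 \leq \dots \leq i_n} 2^{|\{i_1,\dots,i_n\}|} x_{i_1}^{\alpha_1} \cdots x_{i_n}^{\alpha_n} = \eta_\alpha$, in agreement with the remark following Definition~\ref{def : EQ} and with Definition~\ref{def : MEE}.
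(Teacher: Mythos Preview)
Your proof is correct and follows exactly the paper's approach: the paper's proof is the single sentence ``This is a direct consequence of Proposition~\ref{prop : Uqq},'' and you have spelled out precisely that consequence by specializing to $\pi = id_n$ and using $\Des(id_n) = \Peak(id_n) = \emptyset$.
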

\begin{proof}
This is a direct consequence of Proposition \ref{prop : Uqq}.
\end{proof}
Interestingly, one may express general $q$-universal quasisymmetric functions in terms of the $\eta^{(q)}_\al$. To state this result we need the following definition.
\begin{defn}
Let $\al = (\al_1, \dots, \al_n)$ be a composition with $n$ entries.
For any integer $1 \leq i \leq n-1$, we let $\al^{\downarrow i}$ denote the following composition with $n-1$ entries:
\begin{gather*}
\al^{\downarrow i} = (\al_1, \dots, \al_{i-1},{\al_i+\al_{i+1}},\al_{i+2},\dots ,\al_n) .
\end{gather*}  
Furthermore, for any subset $I\subseteq [n-1]$, we set
\[
\al^{\downarrow I}
= \left( \left( \cdots \left( \al^{\downarrow i_k} \right) \cdots \right)^{\downarrow i_2} \right)^{\downarrow i_1},
\]
where $i_1, i_2, \ldots, i_k$
are the elements of $I$
in increasing order.
Finally, if $I$ and $J$ are two subsets of $[n-1]$, with $J$ being peak-lacunar, then we set $\al^{\downarrow I \downarrow\downarrow J} = \al^{\downarrow K}$, where $K = I \cup J \cup (J-1)$.
\end{defn}
\begin{theorem}
\label{thm : UE}
Let $\pi \in S_n$ be a permutation and $\al$ be a composition with $n$ entries. The $q$-universal quasisymmetric function $U^q_{\pi,\alpha}$ may be expressed as a combination of the enriched $q$-monomials: 
\begin{equation}
\label{eq : UE}
U^q_{\pi,\alpha} = \sum_{\substack{I \subseteq \Des(\pi)\\
J \subseteq \Peak(\pi)\\
I\cap J = \emptyset}} (-q)^{|J|}(q-1)^{|I|}\eta^{(q)}_{\alpha^{\downarrow I\downarrow\downarrow J}}.
\end{equation}
\end{theorem}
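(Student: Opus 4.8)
The plan is to compare the two sides of \eqref{eq : UE} coefficient by coefficient as power series in $X$, using the explicit monomial expansions already established. By Proposition \ref{prop : Uqq}, the left-hand side $U^q_{\pi,\alpha}$ is the sum over weakly increasing tuples $i_1 \leq \cdots \leq i_n$ satisfying the peak condition $j \in \Peak(\pi) \Rightarrow i_{j-1} < i_{j+1}$, each contributing $q^{d}(q+1)^{e}$ where $d = |\{j \in \Des(\pi) \mid i_j = i_{j+1}\}|$ and $e = |\{i_1, \ldots, i_n\}|$. So I would fix a monomial $x_{i_1}^{\alpha_1} \cdots x_{i_n}^{\alpha_n}$ coming from some weakly increasing tuple and show that the coefficient on the right-hand side matches. (One should note that the weight vector $\alpha$ only groups together the exponents when consecutive indices coincide, so passing between the "$\alpha$-indexing" and the "tuple-indexing" is harmless; I would phrase everything in terms of the underlying tuple $i_1 \leq \cdots \leq i_n$.)

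Next I would unpack the right-hand side. For a fixed disjoint pair $I \subseteq \Des(\pi)$, $J \subseteq \Peak(\pi)$, the function $\eta^{(q)}_{\alpha^{\downarrow I \downarrow\downarrow J}}$ expands, by the Proposition following Definition \ref{def : EQ}, as a sum over weakly increasing tuples of length $n - |K|$ where $K = I \cup J \cup (J-1)$, with coefficient $(q+1)$ to the number of distinct values. The operation $\alpha \mapsto \alpha^{\downarrow K}$ merges the blocks indexed by $K$; equivalently, a tuple contributing to $\eta^{(q)}_{\alpha^{\downarrow K}}$ is exactly a weakly increasing tuple $i_1 \leq \cdots \leq i_n$ of length $n$ in which $i_j = i_{j+1}$ is \emph{forced} for every $j \in K$. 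So I would re-index the whole right-hand side as a single sum over weakly increasing length-$n$ tuples $\mathbf{i}$: the coefficient of the monomial attached to $\mathbf{i}$ is
\[
(q+1)^{|\{i_1, \ldots, i_n\}|} \sum_{\substack{I \subseteq \Des(\pi) \cap S(\mathbf{i}) \\ J \subseteq \Peak(\pi) \\ I \cap J = \emptyset \\ J \cup (J-1) \subseteq S(\mathbf{i})}} (-q)^{|J|}(q-1)^{|I|},
\]
where $S(\mathbf{i}) = \{j \in [n-1] \mid i_j = i_{j+1}\}$ is the set of "ties" of $\mathbf{i}$. Here I use that forcing $i_j = i_{j+1}$ on a set $K \subseteq S(\mathbf{i})$ does not change the number of distinct values, so the $(q+1)$-exponent is the same for every term and pulls out front; and that the peak condition $j \in \Peak(\pi) \Rightarrow i_{j-1} < i_{j+1}$ on the left translates to requiring $j-1, j \notin S(\mathbf{i})$, i.e. $\Peak(\pi) \cap S(\mathbf{i})^{\pm} = \emptyset$ in the appropriate sense — this needs to be checked carefully since a peak can be "resolved" either because $i_{j-1} < i_j$ or because $i_j < i_{j+1}$.

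The crux is then a purely combinatorial identity: for a fixed tuple $\mathbf{i}$ with tie-set $T = S(\mathbf{i})$, show that
\[
\sum_{\substack{I \subseteq \Des(\pi) \cap T,\ J \subseteq \Peak(\pi) \\ I \cap J = \emptyset,\ J \cup (J-1) \subseteq T}} (-q)^{|J|}(q-1)^{|I|}
= \begin{cases} q^{|\Des(\pi) \cap T|} & \text{if } \Peak(\pi) \subseteq T^{\circ} \text{ (peak condition holds)},\\ 0 & \text{otherwise.}\end{cases}
\]
I expect this to be the main obstacle, and the natural approach is to factor the sum over "local" contributions. Since $\Peak(\pi)$ is peak-lacunar, the constraints $J \cup (J-1) \subseteq T$ and $I \cap J = \emptyset$ decouple across the elements: grouping by which elements of $\Peak(\pi)$ lie in $J$, one is left, for each fixed $J$, with an independent choice of $I \subseteq (\Des(\pi) \cap T) \setminus J$ contributing $\sum_{I}(q-1)^{|I|} = q^{|(\Des(\pi)\cap T)\setminus J|}$ — because $\Peak(\pi) \subseteq \Des(\pi)$ componentwise in the relevant sense, or more precisely because $j \in \Peak(\pi)$ forces $\pi_{j-1} < \pi_j > \pi_{j+1}$ so $j \in \Des(\pi)$, hence $J \subseteq \Des(\pi)$ and when $J \cup (J-1) \subseteq T$ each $j \in J$ with $j \in \Des(\pi)\cap T$ gets excluded from the range of $I$. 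Then the sum over $J$ becomes $q^{|\Des(\pi)\cap T|}\sum_{J} (-q)^{|J|} q^{-|J \cap \Des(\pi) \cap T|}$, and here one splits the peaks into those for which both $j-1, j \in T$ (these genuinely contribute, and a $(-q)^{|J|}$ vs. the $q^{-1}$ correction telescopes to a factor $(1 - 1) = 0$ unless... ) — so that whenever some peak $j$ has $j-1, j \in T$, the binomial-type sum over "include $j$ in $J$ or not" vanishes, killing the whole term; and whenever no such peak exists, i.e. the peak condition holds, only $J = \emptyset$ survives and we get exactly $q^{|\Des(\pi) \cap T|}$. I would also cross-check the result against the known cases $q = 0$ and $q = 1$, where it must reduce to Proposition \ref{prop : LKEE}, and against $\pi = id_n$ (both sides trivially equal $\eta^{(q)}_\alpha$ since $\Des(id_n) = \Peak(id_n) = \emptyset$), as sanity checks.
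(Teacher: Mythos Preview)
Your approach is correct. You expand both sides as sums over weakly increasing tuples and reduce the claim to a coefficient identity; you then sum out $I$ first (using $\sum_{I\subseteq S}(q-1)^{|I|}=q^{|S|}$) and then $J$ to match the left-hand side. The step you flag as the crux works cleanly once you note that $J\subseteq\Peak(\pi)\subseteq\Des(\pi)$ and $J\cup(J-1)\subseteq T$ force $J\subseteq\Des(\pi)\cap T$, so the exponent correction is exactly $q^{-|J|}$ and the remaining sum over $J$ collapses to $(1-1)^{|P_T|}$ with $P_T=\{j\in\Peak(\pi):\{j-1,j\}\subseteq T\}$, which vanishes unless the peak condition holds.

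The paper instead proceeds in the forward direction from \eqref{eq : Uqq}: it partitions the tuples according to the set $K\subseteq\Des(\pi)$ of descents that are ties, inserts two inclusion--exclusion sums (one over $U\subseteq\Des(\pi)\setminus K$ to relax the strict inequalities at non-tied descents, one over $J\subseteq K\cap\Peak(\pi)$ to relax the peak constraint), and then performs the change of variables $I=U\cup(K\setminus J)$ before summing out the auxiliary index $U'=K\setminus J$. The two arguments are dual packagings of the same binomial identities: yours is more transparent once the target coefficient identity is written down, while the paper's derivation discovers the right-hand side rather than verifying it.
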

\begin{proof}
For any subset $V \subseteq [n-1]$, set $\overline{V} = [n-1]\setminus V$. Then, \eqref{eq : Uqq} becomes\footnote{We understand $i_{j-1}$ to be $0$ whenever $j = 1$.}
\begin{align*}
U_{\pi,\alpha}^{q}&=\sum_{K \subseteq \Des(\pi)}\sum_{\substack{i_{1}\leq i_{2}\leq
\dots\leq i_{n}\\
j \in \Des(\pi)\setminus K\Rightarrow i_{j-1}\leq i_{j} < i_{j+1}\\
j \in K\cap \Peak(\pi) \Rightarrow i_{j-1}< i_{j} = i_{j+1}\\
j \in K\cap \overline{\Peak(\pi)} \Rightarrow i_{j-1} \leq i_{j} = i_{j+1}\\
  }}q^{|K|}(q+1)^{|\{i_{1},i_{2},\dots, i_{n}\}|}x_{i_{1}}^{\alpha_{1}%
}x_{i_{2}}^{\alpha_{2}}\dots x_{i_{n}}^{\alpha_{n}} \\
&=\sum_{
\substack{
K \subseteq \Des(\pi)\\
U \subseteq \Des(\pi)\setminus K\\
J \subseteq K\cap \Peak(\pi)\\
}
}
q^{|K|}(-1)^{|U|+|J|}
 \!\!\!\!\! \!\!\!\!\! \!\!\!\!\! \!\!\!\!\! \sum_{\substack{i_{1}\leq i_{2}\leq
\dots\leq i_{n}\\
j \in U \cup K\cap \overline{\Peak(\pi)} \cup K\cap \Peak(\pi) \setminus J \Rightarrow i_{j-1}\leq i_{j} = i_{j+1}\\
j \in J \Rightarrow i_{j-1}= i_{j} = i_{j+1}
 }
 }
 \!\!\!\!\! (q+1)^{|\{i_{1},i_{2},\dots, i_{n}\}|}x_{i_{1}}^{\alpha_{1}%
}x_{i_{2}}^{\alpha_{2}}\dots x_{i_{n}}^{\alpha_{n}} \\
&=\sum_{
\substack{
K \subseteq \Des(\pi)\\
U \subseteq \Des(\pi)\setminus K\\
J \subseteq K\cap \Peak(\pi)\\
}
}
q^{|K|}(-1)^{|U|+|J|}
 \!\!\!\!\! \!\!\!\!\! \!\!\!\!\! \!\!\!\!\! \sum_{\substack{i_{1}\leq i_{2}\leq
\dots\leq i_{n}\\
j \in U \cup K \setminus J \Rightarrow i_{j-1}\leq i_{j} = i_{j+1}\\
j \in J \Rightarrow i_{j-1}= i_{j} = i_{j+1}
 }
 }
 \!\!\!\!\! (q+1)^{|\{i_{1},i_{2},\dots, i_{n}\}|}x_{i_{1}}^{\alpha_{1}%
}x_{i_{2}}^{\alpha_{2}}\dots x_{i_{n}}^{\alpha_{n}}.
\end{align*}
If we set $I=U \cup K \setminus J$ and $U' = I \setminus U = K \setminus J$, then $|U'| = |K| - |J|$ and $I \subseteq \Des(\pi)\setminus J$.
Thus, the above computation becomes
\begin{align*}
U_{\pi,\alpha}^{q}&=\sum_{
\substack{
U' \subseteq I\\
I \subseteq \Des(\pi)\\
J \subseteq \Peak(\pi)\\
I\cap J = \emptyset
}
}
q^{|U'| + |J|}(-1)^{|U'| + |I| + |J|}
 \!\!\!\!\! \!\!\!\!\! \sum_{\substack{i_{1}\leq i_{2}\leq
\dots\leq i_{n}\\
j \in I \Rightarrow i_{j-1}\leq i_{j} = i_{j+1}\\
j \in J \Rightarrow i_{j-1}= i_{j} = i_{j+1}
 }
 }
 \!\!\!\!\! (q+1)^{|\{i_{1},i_{2},\dots, i_{n}\}|}x_{i_{1}}^{\alpha_{1}%
}x_{i_{2}}^{\alpha_{2}}\dots x_{i_{n}}^{\alpha_{n}}.
\end{align*}
Summing over $U'$ yields formula (\ref{eq : UE}).
\end{proof}
\begin{corollary}
\label{cor : EEE}
Let $\al=(\al_1,\dots,\al_n)$ and $\beta=(\be_1,\dots,\be_m)$ be two compositions. Let $\al\shuffle\be$ be the multiset of compositions obtained by shuffling $\al$ and $\be$. As in \cite{GriVas21}, given $\gamma \in  \al\shuffle\be$, let $S_\beta(\gamma)$ be the set of the positions of the entries of $\beta$ in $\gamma$. Set furthermore $S_\beta(\gamma)-1 =\{i-1|i \in S_\beta(\gamma)\}$ and $T_\beta(\gamma) = S_\beta\tup{\gamma} \setminus \tup{S_\beta\tup{\gamma} - 1}$. Then,
\begin{equation}
\eta_{\alpha}^{(q)}\eta_{\beta}^{(q)}=\sum_{\substack{\gamma\in\alpha
\shuffle\beta;\\I \subseteq T_{\beta}(\gamma) \setminus \set{n+m} ;\\ J\subseteq T_{\beta}(\gamma) \setminus \set{1, n+m}  ;\\I\cap J=\emptyset
}}(q-1)^{|I|}(-q)^{|J|}\eta_{\gamma^{\downarrow I\downarrow\downarrow J}}%
^{(q)}.\label{eq : EEE}%
\end{equation}
\end{corollary}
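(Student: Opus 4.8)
The plan is to combine the product formula for $q$-universal quasisymmetric functions (Proposition~\ref{prop : UUU}) with the expansion of a $q$-universal quasisymmetric function into enriched $q$-monomials (Theorem~\ref{thm : UE}), just as the analogous result in \cite{GriVas21} is derived. First I would write $\eta^{(q)}_\alpha = U^q_{id_n,\alpha}$ and $\eta^{(q)}_\beta = U^q_{id_m,\beta}$ by Definition~\ref{def : EQ}, so that Proposition~\ref{prop : UUU} gives $\eta^{(q)}_\alpha \eta^{(q)}_\beta = \sum_{(\tau,\gamma) \in (id_n,\alpha)\shuffle(id_m,\beta)} U^q_{\tau,\gamma}$. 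The key combinatorial observation is that for a coshuffle of $(id_n,\alpha)$ with $(id_m,\beta)$, the shuffled permutation $\tau$ and its descent and peak sets are completely determined by $S_\beta(\gamma)$: since $\pi = id_n$ is increasing and $m+\sigma = (m+1,\dots,m+n)$ — wait, here it is $n+\sigma$ in the notation of the coshuffle definition, so the second word is $(n+1,\dots,n+m)$, also increasing — a descent of $\tau$ occurs at position $j$ exactly when position $j$ holds a letter from the $n+\sigma$ block and position $j+1$ holds a letter from the $\pi$ block. One checks that $\Des(\tau) = S_\beta(\gamma) - 1$ restricted appropriately, and more precisely (following \cite{GriVas21}) that $\Des(\tau)$ and $\Peak(\tau)$ can be read off so that the pair $(I,J)$ ranging over $I \subseteq \Des(\tau)$, $J \subseteq \Peak(\tau)$, $I \cap J = \emptyset$ in Theorem~\ref{thm : UE} translates, after fixing $\gamma$ and summing over all $\tau$ compatible with it, into $I \subseteq T_\beta(\gamma)\setminus\{n+m\}$ and $J \subseteq T_\beta(\gamma)\setminus\{1,n+m\}$.

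The cleanest route is probably to first establish the bijective/indexing lemma relating the data $(\tau,\gamma)$ of a coshuffle to the data $\gamma$ together with the descent structure, and to identify the set $\Des(\tau)$. Concretely: a coshuffle records, for each of the $n+m$ positions, whether it came from the first or second word; the positions from the second word are exactly $S_\beta(\gamma)$. A descent of $\tau$ at $j$ means $\tau_j > \tau_{j+1}$, which (both blocks being increasing and the second block's values all exceeding the first block's) happens iff $j \in S_\beta(\gamma)$ and $j+1 \notin S_\beta(\gamma)$, i.e. $\Des(\tau) = S_\beta(\gamma) \setminus \bigl((S_\beta(\gamma)+1)\bigr)$ intersected with $[n+m-1]$ — equivalently the "last position of each maximal run of second-word letters". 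Then $\Peak(\tau)$ consists of those $j$ with $j-1 \notin$(run) and $j \in$(run) ends... I would double-check against \cite[proof of the product corollary]{GriVas21} that this yields precisely $\Des(\tau) \supseteq$ the relevant set and that summing Theorem~\ref{thm : UE} over all $\tau$ for fixed $\gamma$ collapses to a single sum over $I, J \subseteq T_\beta(\gamma)$ with the stated exclusions of $1$ and $n+m$; the exclusion of $n+m$ comes from $\Des(\tau) \subseteq [n+m-1]$ and the exclusion of $1$ from the peak-lacunarity (peaks cannot be at position $1$).

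The main obstacle I anticipate is bookkeeping: verifying that the passage from "sum over coshuffles $(\tau,\gamma)$, then expand each $U^q_{\tau,\gamma}$ via Theorem~\ref{thm : UE}" to "sum over $\gamma \in \alpha\shuffle\beta$ and over $(I,J)$ as in \eqref{eq : EEE}" is a genuine bijection — in particular that different coshuffles $\tau$ yielding the same $\gamma$ do not double-count, and that the coefficient $(q-1)^{|I|}(-q)^{|J|}$ is preserved with the correct index $\gamma^{\downarrow I \downarrow\downarrow J}$ under the operation $\alpha^{\downarrow I \downarrow\downarrow J}$ when $I, J$ are interpreted inside $\gamma$. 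Since in a coshuffle of two \emph{increasing} words the permutation $\tau$ is in fact \emph{uniquely} determined by the choice of which positions are second-word positions (because within each block the order is forced), there is exactly one $\tau$ per $\gamma$, so no double-counting occurs and the coefficients match verbatim with those of Theorem~\ref{thm : UE} once $\Des(\tau)$ and $\Peak(\tau)$ are rewritten in terms of $T_\beta(\gamma)$. I would then remark that this argument is formally identical to the proof of the corresponding product formula for enriched monomials in \cite{GriVas21}, the only change being the insertion of the parameter $q$, which is inert under all the above manipulations; hence I would state the proof compactly by reducing to Theorem~\ref{thm : UE}, Proposition~\ref{prop : UUU}, and the cited combinatorial lemma, rather than reproving the shuffle bookkeeping in full.
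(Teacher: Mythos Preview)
Your approach is exactly the paper's: combine Definition~\ref{def : EQ} (Equation~\eqref{eq : EU}), Proposition~\ref{prop : UUU}, and Theorem~\ref{thm : UE}, then identify $\Des(\tau)=T_\beta(\gamma)\setminus\{n+m\}$ and $\Peak(\tau)=T_\beta(\gamma)\setminus\{1,n+m\}$ for the unique $\tau$ coming from each shuffle. One minor slip: your intermediate expression $S_\beta(\gamma)\setminus(S_\beta(\gamma)+1)$ picks out \emph{first} positions of runs, not last; the correct set is $S_\beta(\gamma)\setminus(S_\beta(\gamma)-1)=T_\beta(\gamma)$, which is what you end up using anyway.
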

\begin{proof}
Corollary \ref{cor : EEE} is a consequence of Theorem \ref{thm : UE}, Equation (\ref{eq : EU}) and Proposition \ref{prop : UUU}.
\end{proof}
\subsection{Relation to the monomial and fundamental bases}
We consider the alternative indexing with sets proposed at the end of Section \ref{sec : QSym}. Given a set of positive integers $I \subseteq [s-1]$, the enriched $q$-monomial may be written as
\begin{equation*}
\eta^{(q)}_{I} =\sum_{\substack{i_1\leq\dots\leq i_s\\ j \in I \Rightarrow i_j=i_{j+1}}}(q+1)^{|\{i_1,\dots,i_s\}|}x_{i_1}\dots x_{i_s}.
\end{equation*}
\begin{proposition}
Let $I \subseteq [s-1]$ be a set of positive integers. One has
\begin{equation}
\label{eq : EM}
\eta^{(q)}_I = \sum_{I \subseteq J}(q+1)^{s - |J|}M_J.
\end{equation}
\end{proposition}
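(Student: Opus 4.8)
The plan is to prove \eqref{eq : EM} directly by comparing the monomial expansions of both sides, using the set-indexed definitions of $\eta^{(q)}_I$ and $M_J$ given just above and at the end of Section~\ref{sec : QSym}. First I would fix a weakly increasing sequence $i_1 \le i_2 \le \dots \le i_s$ and observe that it determines a unique maximal subset $J_0 \subseteq [s-1]$ with the property that $j \in J_0 \Leftrightarrow i_j = i_{j+1}$; this is precisely the index such that the sequence contributes to $M_{J_0}$. The constraint ``$j \in I \Rightarrow i_j = i_{j+1}$'' appearing in the definition of $\eta^{(q)}_I$ is then equivalent to $I \subseteq J_0$. Hence a given sequence $i_1 \le \dots \le i_s$ contributes the monomial $(q+1)^{|\{i_1,\dots,i_s\}|} x_{i_1}\cdots x_{i_s}$ to $\eta^{(q)}_I$ exactly when $I \subseteq J_0$, where $J_0$ is the ``equality set'' of the sequence.

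Next I would rewrite the right-hand side $\sum_{I \subseteq J}(q+1)^{s-|J|} M_J$ by substituting the monomial expansion $M_J = \sum_{i_1 \le \dots \le i_s,\ j\in J \Leftrightarrow i_j = i_{j+1}} x_{i_1}\cdots x_{i_s}$. Collecting the coefficient of a fixed monomial $x_{i_1}\cdots x_{i_s}$ coming from a sequence with equality set $J_0$: on the right-hand side this monomial appears only in the single term $M_{J_0}$ (since $J_0$ is uniquely determined by the sequence), and it appears there iff $I \subseteq J_0$, contributing $(q+1)^{s - |J_0|}$. So I just need $(q+1)^{s-|J_0|} = (q+1)^{|\{i_1,\dots,i_s\}|}$, i.e.\ $s - |J_0| = |\{i_1,\dots,i_s\}|$, which holds because a weakly increasing sequence of length $s$ with exactly $|J_0|$ ``equalities'' $i_j = i_{j+1}$ has exactly $s - |J_0|$ distinct values (each maximal run of equal values of length $\ell$ contributes $\ell - 1$ to $|J_0|$ and $1$ to the number of distinct values). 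This matches the coefficient on the left-hand side.

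The only genuinely delicate point — and the step I expect to need the most care — is the bookkeeping of which sequences contribute to which $M_J$ versus to $\eta^{(q)}_I$: the definition of $M_J$ uses ``$j \in J \Leftrightarrow i_j = i_{j+1}$'' (an iff, so the $M_J$ partition the set of all weakly increasing sequences), while $\eta^{(q)}_I$ uses ``$j \in I \Rightarrow i_j = i_{j+1}$'' (a one-directional implication). Conflating these is the easy mistake; making the partition structure explicit, as above, is what makes the coefficient comparison clean. Once that is laid out, the identity $s - |J_0| = |\{i_1,\dots,i_s\}|$ and the equality of exponents finish the proof, and no summation over subsets is actually needed on the right — the sum $\sum_{I \subseteq J}$ collapses because each monomial singles out one $J = J_0$. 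Alternatively, one could deduce \eqref{eq : EM} from Theorem~\ref{thm : UE} with $\pi = \overline{id_n}$ (so $\Peak(\pi) = \emptyset$ and $\Des(\pi) = [n-1]$) specialised at the relevant indices, but the direct coefficient comparison is shorter and more transparent.
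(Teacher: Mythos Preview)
Your main argument is correct and is exactly the natural justification: partition weakly increasing sequences $i_1 \le \dots \le i_s$ by their equality set $J_0 = \{j \in [s-1] : i_j = i_{j+1}\}$, note that $|\{i_1,\dots,i_s\}| = s - |J_0|$, and observe that the ``$\Rightarrow$'' constraint in the definition of $\eta^{(q)}_I$ amounts to $I \subseteq J_0$. The paper in fact states this proposition without proof (it is treated as routine), so there is nothing to compare against; your write-up is precisely the kind of verification the authors are leaving to the reader.

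One small caveat: your closing aside about deducing \eqref{eq : EM} from Theorem~\ref{thm : UE} with $\pi = \overline{id_n}$ does not quite work as stated. That theorem expands $U^q_{\pi,\alpha}$ in the $\eta^{(q)}$ basis, so specialising to $\overline{id_n}$ gives a formula for $U^q_{\overline{id_n},\alpha}$ in terms of the $\eta^{(q)}$, not a formula for $\eta^{(q)}_I$ in terms of the classical monomials $M_J$. Since $M_\alpha = U^{\PP}_{\overline{id_n},\alpha}$ (the $q=0$ object) rather than $U^q_{\overline{id_n},\alpha}$, there is no direct route from Theorem~\ref{thm : UE} to \eqref{eq : EM}. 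This does not affect your actual proof, which stands on its own.
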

\begin{theorem}
Let $q \in \mathbf{k}$ be such that $q+1$ is invertible. The family of enriched $q$-monomial quasisymmetric functions $\left(\eta^{(q)}_{s,I} \right )_{s\geq0, I\subseteq[s-1]}$ is a basis of $\QSym$. Furthermore
\begin{equation}
\label{eq : ME}
(q+1)^{s - |J|}M_J = \sum_{J \subseteq I}(-1)^{|I\setminus J|}\eta^{(q)}_I.
\end{equation}
\end{theorem}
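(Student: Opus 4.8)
The plan is to derive this theorem directly from the already-established relation \eqref{eq : EM}, treating it as a triangular change of basis. The key observation is that \eqref{eq : EM} expresses each $\eta^{(q)}_I$ as a sum over $J \supseteq I$ of the scaled monomials $(q+1)^{s-|J|}M_J$, with the "diagonal" term ($J = I$) having coefficient $1$. This is a unitriangular transformation (with respect to the partial order of inclusion on subsets of $[s-1]$, graded so that larger sets come first) from the family $\bigl((q+1)^{s-|J|}M_J\bigr)_J$ to the family $\bigl(\eta^{(q)}_I\bigr)_I$.

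First I would recall (from Proposition~\ref{prop : LK} and the discussion around Definition~\ref{def : MEE}, or rather from the standard fact that the monomials $M_J$ form a basis of $\QSym$) that $\bigl(M_{s,J}\bigr)_{s \geq 0,\, J \subseteq [s-1]}$ is a basis of $\QSym$. Since $q+1$ is invertible in $\kk$, each scalar $(q+1)^{s-|J|}$ is a unit, so the rescaled family $\bigl((q+1)^{s-|J|}M_{s,J}\bigr)_{s \geq 0,\, J}$ is also a basis of $\QSym$. Then I would invoke the general principle that a unitriangular change of basis (over a fixed grading, here by $s$, and an underlying partial order on each graded piece, here reverse inclusion on subsets of $[s-1]$) sends a basis to a basis and is invertible over $\kk$; hence $\bigl(\eta^{(q)}_{s,I}\bigr)_{s \geq 0,\, I}$ is a basis of $\QSym$.

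For the explicit inversion formula \eqref{eq : ME}, I would perform a standard Möbius-style inversion on the Boolean lattice. Starting from \eqref{eq : EM}, substitute it into the right-hand side of \eqref{eq : ME} and compute
\begin{align*}
\sum_{J \subseteq I} (-1)^{|I \setminus J|} \eta^{(q)}_I
&= \sum_{J \subseteq I} (-1)^{|I \setminus J|} \sum_{I \subseteq K} (q+1)^{s - |K|} M_K \\
&= \sum_{J \subseteq K} (q+1)^{s-|K|} M_K \sum_{J \subseteq I \subseteq K} (-1)^{|I \setminus J|}.
\end{align*}
The inner sum $\sum_{J \subseteq I \subseteq K} (-1)^{|I \setminus J|}$ equals $\sum_{S \subseteq K \setminus J} (-1)^{|S|}$, which is $0$ unless $K \setminus J = \emptyset$, i.e.\ $K = J$ (since $J \subseteq K$), in which case it is $1$. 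So the whole expression collapses to $(q+1)^{s - |J|} M_J$, proving \eqref{eq : ME}. This computation is purely formal and poses no real difficulty.

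The only mildly delicate point — the "main obstacle," such as it is — is making the unitriangularity argument fully rigorous: one must be careful that the index $s$ (the total degree, equal to one more than the size of the ambient set $[s-1]$) is fixed within each application of triangularity, so that the transformation restricted to each homogeneous component of $\QSym$ of degree $s$ is a finite unitriangular matrix, and then assemble these over all $s$. Since everything is graded with finite-dimensional components (over $\kk$, these are free of finite rank), there are no convergence or well-definedness issues, and the argument goes through cleanly. In fact, once \eqref{eq : ME} is established, it alone exhibits the $\eta^{(q)}_I$ as a $\kk$-spanning set whose transition matrix to a known basis is invertible, giving a second, self-contained proof that $\bigl(\eta^{(q)}_{s,I}\bigr)$ is a basis.
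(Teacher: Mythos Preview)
Your proof is correct and follows the same approach as the paper, which simply says the result ``follows from Equation~\eqref{eq : EM} by M\"obius inversion.'' Your write-up spells out exactly this M\"obius inversion on the Boolean lattice and the unitriangularity argument that the paper leaves implicit.
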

\begin{proof}
Follows from Equation (\ref{eq : EM}) by Möbius inversion. 
\end{proof}
We develop further the properties of the enriched $q$-monomial basis of $\QSym$.
\begin{proposition}
\label{prop : EL}
Let $s$ be a positive integer and $I \subseteq [s-1]$. One may expand the enriched $q$-monomials in the fundamental basis as
\begin{equation}
\label{eq : EL}
\eta^{(q)}_I = (q+1) \sum_{J \subseteq [s-1]}(-1)^{|J|}(-q)^{|J\setminus I|}L_J.
\end{equation} 
\end{proposition}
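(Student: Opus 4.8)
The plan is to prove Proposition~\ref{prop : EL} by combining two expansions that are already available in the excerpt: the expansion \eqref{eq : EM} of $\eta^{(q)}_I$ in the monomial basis $M_J$, and Proposition~\ref{prop : LKEE}, which writes $L_I = \sum_{U \subseteq I}(-1)^{|U|}E_U$. Since \eqref{eq : EM} is stated for the monomial basis $M$ rather than the essential basis $E$, the first step is to record the relation between $M$ and $E$ in the set-indexing convention: from the definitions in Definition~\ref{def : MEE} one has $E_J = \sum_{J \subseteq K} M_K$, and hence, by M\"obius inversion on the Boolean lattice of subsets of $[s-1]$, $M_J = \sum_{J \subseteq K}(-1)^{|K \setminus J|}E_K$. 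Equivalently, Proposition~\ref{prop : LKEE} can be inverted directly to express $E_U$ in terms of the $L$'s: $E_U = \sum_{W \subseteq U}L_W$ (again M\"obius inversion, since $L_I = \sum_{U \subseteq I}(-1)^{|U|}E_U$ is triangular). This second route is cleaner, so I would go through $E$ and then through $L$ in one pass.

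Next I would substitute. Starting from $\eta^{(q)}_I = \sum_{I \subseteq J}(q+1)^{s-|J|}M_J$ and then $M_J = \sum_{J \subseteq K}(-1)^{|K \setminus J|}E_K$, interchange the two sums to get a sum over $K \supseteq I$ with $E_K$ weighted by $\sum_{I \subseteq J \subseteq K}(q+1)^{s-|J|}(-1)^{|K \setminus J|}$. Writing $J = I \cup S$ with $S \subseteq K \setminus I$ and setting $m = |K \setminus I|$, this inner sum factors as $(q+1)^{s-|I|}\sum_{S \subseteq K\setminus I}(q+1)^{-|S|}(-1)^{m-|S|} = (q+1)^{s-|I|}\bigl(\tfrac{1}{q+1}-1\bigr)^{m} = (q+1)^{s-|I|-m}(-q)^{m}$, i.e.\ $(q+1)^{s-|K|}(-q)^{|K \setminus I|}$. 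So $\eta^{(q)}_I = \sum_{I \subseteq K}(q+1)^{s-|K|}(-q)^{|K\setminus I|}E_K$. Then expand $E_K = \sum_{W \subseteq K}L_W$, swap sums again to collect the coefficient of $L_W$ for each $W \subseteq [s-1]$, and simplify to the claimed form \eqref{eq : EL}; the bookkeeping is a second geometric-type sum of the same flavour as the first, now over the subsets $K$ sitting between $I \cup W$ (actually between $W$ and the constraint $I \subseteq K$) — I would parametrise carefully so that the factor $(-q)^{|W \setminus I|}$ emerges.

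Concretely, after the $E \to L$ substitution the coefficient of $L_W$ is $\sum_{K}(q+1)^{s-|K|}(-q)^{|K\setminus I|}$, where $K$ ranges over all subsets with $I \subseteq K$ and $W \subseteq K$, i.e.\ $I \cup W \subseteq K \subseteq [s-1]$. Write $K = (I \cup W) \cup T$ with $T \subseteq [s-1]\setminus(I\cup W)$; since $|K \setminus I| = |W \setminus I| + |T|$ (because $T$ is disjoint from $I$), the sum becomes $(q+1)^{s-|I\cup W|}(-q)^{|W\setminus I|}\sum_{T}\bigl((q+1)^{-1}(-q)\bigr)^{|T|}$ over $T \subseteq [s-1]\setminus(I\cup W)$, which evaluates to $(q+1)^{s-|I\cup W|}(-q)^{|W\setminus I|}\bigl(1 + \tfrac{-q}{q+1}\bigr)^{s-1-|I\cup W|} = (q+1)^{s-|I\cup W|}(-q)^{|W\setminus I|}(q+1)^{-(s-1-|I\cup W|)} = (q+1)(-q)^{|W\setminus I|}$. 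This is exactly the coefficient $(q+1)(-1)^{|W|}(-q)^{|W\setminus I|}$ demanded by \eqref{eq : EL} once one notices the sign: in fact my computation gives coefficient $(q+1)(-q)^{|W\setminus I|}$ with no extra $(-1)^{|W|}$, so I must double-check the sign conventions in $E_K = \sum_{W\subseteq K}L_W$ versus the stated formula — the cleanest fix is to reconcile signs by re-deriving $E_U = \sum_{W\subseteq U}(\pm 1)L_W$ directly from $L_I = \sum_{U \subseteq I}(-1)^{|U|}E_U$ and tracking the Möbius function $\mu(W,U) = (-1)^{|U\setminus W|}$ with the right placement of the $(-1)$'s.

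The main obstacle I anticipate is precisely this sign bookkeeping: the formula \eqref{eq : EL} carries a $(-1)^{|J|}(-q)^{|J\setminus I|}$, and matching it requires being scrupulous about whether the inversion of Proposition~\ref{prop : LKEE} is $E_U = \sum_{W\subseteq U}L_W$ or $E_U = \sum_{W\subseteq U}(-1)^{|U\setminus W|}L_W$ (it is the former, since $L_I = \sum_{U}(-1)^{|U|}E_U$ means the transition matrix from $E$ to $L$ has entries $(-1)^{|U|}$, whose inverse on the Boolean lattice has entries $(-1)^{|W|}(-1)^{|U\setminus W|}=(-1)^{|U|-2|W\setminus U|}\cdot$... — this needs care). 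Once the correct inversion is pinned down, the two geometric sums are routine. An alternative that sidesteps the issue entirely is to verify \eqref{eq : EL} coefficientwise against the explicit monomial expansions: both sides, when expanded in the $x$-variables, have the coefficient of a monomial $x_{i_1}\cdots x_{i_s}$ determined by the distinctness pattern of the $i$'s and the positions where the pattern is forced; comparing these via Proposition~\ref{prop : LK} (for $L_J$) and the displayed formula for $\eta^{(q)}_I$ reduces everything to a single binomial identity $\sum_{j\geq0}\binom{r}{j}(-1)^{j}(-q)^{j-\#(\cdots)} = \cdots$, which can be checked directly. I would present the substitution proof as the main argument and mention the coefficientwise check as a remark.
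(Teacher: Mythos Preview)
Your approach matches the paper's: start from \eqref{eq : EM} and pass to the fundamental basis (the paper goes directly $M \to L$, citing Gessel; your detour through $E$ is equivalent). The only loose end is the sign you flagged in the $E \to L$ inversion. The correct inverse of $L_I = \sum_{U \subseteq I}(-1)^{|U|}E_U$ is
\[
E_U \;=\; \sum_{W \subseteq U}(-1)^{|W|}L_W,
\]
not $\sum_{W \subseteq U}L_W$. Indeed, factor the transition matrix as $Z\cdot D$ with $Z_{I,U} = [U\subseteq I]$ the zeta matrix of the Boolean lattice and $D = \operatorname{diag}\bigl((-1)^{|U|}\bigr)$; then the inverse is $D^{-1}Z^{-1}$, whose $(U,W)$-entry is $(-1)^{|U|}\cdot [W\subseteq U](-1)^{|U\setminus W|} = [W\subseteq U](-1)^{|W|}$. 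With this extra $(-1)^{|W|}$ in hand, your second geometric sum over $T \subseteq [s-1]\setminus(I\cup W)$ yields the coefficient of $L_W$ as $(q+1)(-1)^{|W|}(-q)^{|W\setminus I|}$, which is exactly \eqref{eq : EL}; no alternative coefficientwise argument is needed.
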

\begin{proof}
The expression above is a consequence of Equation (\ref{eq : EM}) and the expansion of monomial quasisymmetric functions in the fundamental basis (see e.g. \cite{Ges84}). 
\end{proof}
\begin{proposition}
Let $s$ be a positive integer, $J \subseteq [s-1]$ and let $q \in \mathbf{k}$. Then,
\begin{equation}
\label{eq : LE}
(q+1)^s L_J = \sum_{I \subseteq [s-1]}(-1)^{|I|}(-q)^{|I\setminus J|}\eta^{(q)}_I.
\end{equation} 
\end{proposition}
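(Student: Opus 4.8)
The plan is to invert the relation \eqref{eq : EL} from Proposition \ref{prop : EL}, exactly as the relation \eqref{eq : ME} was obtained by inverting \eqref{eq : EM}. Concretely, \eqref{eq : EL} expresses each $\eta^{(q)}_I$ as a $\kk$-linear combination of the $L_J$ with $J \subseteq [s-1]$; I would view this as a change-of-basis matrix between the two families $\left(\eta^{(q)}_I\right)_{I \subseteq [s-1]}$ and $\left(L_J\right)_{J \subseteq [s-1]}$ (both of fixed degree $s$), and show that its inverse is given by the matrix appearing in \eqref{eq : LE}. Since \eqref{eq : LE} is really a finite identity among elements of $\QSym$, it suffices to verify that substituting \eqref{eq : EL} into the right-hand side of \eqref{eq : LE} returns $(q+1)^s L_J$.

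First I would write out the substitution:
\begin{align*}
\sum_{I \subseteq [s-1]}(-1)^{|I|}(-q)^{|I\setminus J|}\eta^{(q)}_I
&= \sum_{I \subseteq [s-1]}(-1)^{|I|}(-q)^{|I\setminus J|}(q+1)\sum_{H \subseteq [s-1]}(-1)^{|H|}(-q)^{|H\setminus I|}L_H \\
&= (q+1)\sum_{H \subseteq [s-1]} L_H \sum_{I \subseteq [s-1]} (-1)^{|I|+|H|}(-q)^{|I\setminus J| + |H\setminus I|}.
\end{align*}
So the task reduces to the purely combinatorial claim that for all $H, J \subseteq [s-1]$,
\[
\sum_{I \subseteq [s-1]} (-1)^{|I|+|H|}(-q)^{|I\setminus J| + |H\setminus I|} = (q+1)^{s-1}\,[H = J].
\]
I would prove this by splitting the ground set $[s-1]$ according to membership in $J$ and $H$: for each element one gets an independent factor in the sum over $I$ (since $I$ ranges over all subsets), so the sum factors as a product over the four blocks $[s-1]\setminus(J\cup H)$, $J\setminus H$, $H\setminus J$, $J\cap H$. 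A short case check of the per-element factor in each block (e.g. an element outside $J$ and outside $H$ contributes $\sum_{\epsilon \in \{0,1\}}(-1)^\epsilon (-q)^{\epsilon + 0} = 1 - q \cdot$ wait — more carefully, for an element in $[s-1]\setminus(J\cup H)$ it is not in $H$ and may or may not be in $I$, contributing $(-q)^{0}+(-1)(-q)^{1}\cdot(-q)^{0}=1+q$ when it is not in $H$; and for an element in $H$ one gets a factor forcing $H=J$) shows that the product vanishes unless $H = J$, in which case it equals $(q+1)^{s-1}$.

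The main obstacle is purely bookkeeping: getting the exponents $|I\setminus J|$ and $|H\setminus I|$ correctly tracked through the four blocks and confirming that the "cross" blocks $J\setminus H$ and $H\setminus J$ each contribute a vanishing factor while $[s-1]\setminus(J\cup H)$ contributes $q+1$ and $J\cap H$ contributes $1$, yielding the Kronecker delta times $(q+1)^{s-1}$. An alternative, perhaps cleaner, route that avoids re-deriving this identity is to note that \eqref{eq : EL} and \eqref{eq : LE} are, up to the harmless scalars $(q+1)$ and $(q+1)^s$, mutually inverse triangular-in-disguise transformations over the Boolean lattice, and to invoke the same Möbius/binomial-inversion mechanism already used for \eqref{eq : EM}–\eqref{eq : ME}; but since the kernels here are not simply $(-1)^{|I\setminus J|}$ I expect writing the explicit per-element factorization above to be the most transparent argument. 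Either way, once the displayed combinatorial identity is established the proposition follows immediately, and I would also remark that $q+1$ need not be invertible for \eqref{eq : LE} (unlike \eqref{eq : ME}), since the statement only multiplies $L_J$ by $(q+1)^s$ rather than dividing.
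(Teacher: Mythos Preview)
Your approach is sound: substitute \eqref{eq : EL} into the right-hand side of \eqref{eq : LE} and verify that the double sum collapses to $(q+1)^s L_J$ by factoring the inner sum $\sum_I (-1)^{|I|+|H|}(-q)^{|I\setminus J|+|H\setminus I|}$ over the elements of $[s-1]$. One small bookkeeping slip: the block $J\cap H$ does \emph{not} contribute the factor $1$ but rather $q+1$ (for $k\in J\cap H$ the two choices $k\notin I$ and $k\in I$ give $(-1)(-q)=q$ and $(-1)(-1)=1$, summing to $q+1$). This does not affect your conclusion, since when $H=J$ every element lies in either $[s-1]\setminus(J\cup H)$ or $J\cap H$, both of which contribute $q+1$, so the product is still $(q+1)^{s-1}$ and the Kronecker delta follows.

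The paper itself states this proposition without proof. Given how the immediately preceding Proposition~\ref{prop : EL} is proved (``a consequence of Equation \eqref{eq : EM} and the expansion of monomial quasisymmetric functions in the fundamental basis''), the intended argument is presumably the parallel one: plug \eqref{eq : EM} into the right-hand side of \eqref{eq : LE} and use $L_J=\sum_{K\cap J=\emptyset} M_K$. Concretely,
\[
\sum_{I}(-1)^{|I|}(-q)^{|I\setminus J|}\eta^{(q)}_I
=\sum_{K}(q+1)^{s-|K|}M_K\sum_{I\subseteq K}(-1)^{|I|}(-q)^{|I\setminus J|},
\]
and the inner sum factors as $0^{|K\cap J|}(q+1)^{|K\setminus J|}$, which kills all $K$ meeting $J$ and turns the surviving terms into $(q+1)^s\sum_{K\cap J=\emptyset}M_K=(q+1)^s L_J$. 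This route is a bit shorter (two blocks instead of four) and stays entirely within the $M$-basis; your route has the virtue of showing directly that \eqref{eq : EL} and \eqref{eq : LE} are mutually inverse up to the scalar $(q+1)^{s+1}$, which is exactly the ``$f^2=(q+1)^{s+1}\id$'' observation the paper makes immediately afterwards. Your remark that $q+1$ need not be invertible is correct and worth keeping.
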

Equations (\ref{eq : EL}) and (\ref{eq : LE}) expand the fundamental and enriched $q$-monomial bases in terms of one another, and thus suggest a duality relation between the two. Let  $\QSym_s$ be the vector subspace of $\QSym$ containing the homogeneous quasisymmetric functions of degree $s$. Define 
$f:\QSym_{s}\rightarrow \QSym_{s}$ as the $\mathbf{k}$-linear map that sends each $L_I$ to
$\eta^{(q)}_I$ for $I \subseteq [s-1]$. Then $f^{2}$ is a scaling by $(q+1)^{s+1}$
(that is, $f^{2}=(q+1)^{s+1}\operatorname*{id}$). Moreover,
\[
f\left(  M_I\right)  = (q+1)^{|I| + 1 } M_{[s-1]\setminus I}
\qquad \text{for any $I \subseteq [s-1]$.}
\]
\subsection{Antipode}
\label{sec : anti}
For an integer $s$ and a subset $I \subseteq [s-1]$, we set $s-I = \{s-i|i \in I\}$. The \emph{antipode} of $\QSym$ (see \cite[Chapter 5]{GriRei20}) can be defined as the unique $\mathbf{k}$-linear map $S : \QSym \to \QSym$  that satisfies
\[
S\left(  M_{I}\right)  =\left(  -1\right)  ^{s-|I|}\sum_{(s-I) \subseteq J}M_{J}.
\]

\begin{proposition} Assume that $q$ is invertible in $\mathbf{k}$,
and let $p=\dfrac{1}{q}$. Then, for $I \subseteq [s-1]$,
\begin{equation}
\label{eq : anti}
S\left(  \eta_{I}^{\left( q\right)  }\right)  =\left(  -q\right)
^{s - |I|  }\eta_{s-I}^{\left( p\right)  }\ .
\end{equation}
\end{proposition}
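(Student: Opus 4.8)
The plan is to verify \eqref{eq : anti} by applying both sides of the claimed identity to the expansion of $\eta_I^{(q)}$ in the monomial basis, using the known formula for $S$ on $M_J$. First I would recall from \eqref{eq : EM} that $\eta_I^{(q)} = \sum_{I \subseteq J} (q+1)^{s-|J|} M_J$, so by linearity and the defining formula for the antipode,
\[
S\left(\eta_I^{(q)}\right) = \sum_{I \subseteq J} (q+1)^{s-|J|} (-1)^{s-|J|} \sum_{(s-J) \subseteq K} M_K = \sum_{I \subseteq J} \left(-(q+1)\right)^{s-|J|} \sum_{(s-J) \subseteq K} M_K .
\]
On the other side, I would expand $\eta_{s-I}^{(p)}$ via \eqref{eq : EM} with $q$ replaced by $p = 1/q$: namely $\eta_{s-I}^{(p)} = \sum_{(s-I) \subseteq K} (p+1)^{s-|K|} M_K$, so the right-hand side of \eqref{eq : anti} is $(-q)^{s-|I|} \sum_{(s-I)\subseteq K} (p+1)^{s-|K|} M_K$.

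The core of the proof is then to match coefficients of a fixed monomial $M_K$ on the two sides. On the left, the coefficient of $M_K$ is $\sum_{J} \left(-(q+1)\right)^{s-|J|}$, where $J$ ranges over subsets of $[s-1]$ with $I \subseteq J$ and $s - J \subseteq K$, equivalently $I \subseteq J \subseteq [s-1] \setminus (s - K)$ (writing $s-K$ for $\{s-k : k \in K\}$). Setting $A = [s-1] \setminus (s-K)$, this sum is $\sum_{I \subseteq J \subseteq A} \left(-(q+1)\right)^{s-|J|}$, which is nonzero only when $I \subseteq A$, i.e. $I \cap (s-K) = \emptyset$, i.e. $(s-I) \subseteq K$ — matching the support condition on the right. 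When $I \subseteq A$, substituting $J = I \sqcup B$ with $B \subseteq A \setminus I$ gives
\[
\sum_{B \subseteq A \setminus I} \left(-(q+1)\right)^{s - |I| - |B|} = \left(-(q+1)\right)^{s-|I|} \sum_{B \subseteq A \setminus I} \left(\frac{-1}{q+1}\right)^{|B|} = \left(-(q+1)\right)^{s-|I|} \left(1 - \frac{1}{q+1}\right)^{|A \setminus I|} = \left(-(q+1)\right)^{s-|I|} \left(\frac{q}{q+1}\right)^{|A \setminus I|}.
\]
Since $|A \setminus I| = |A| - |I| = (s - 1 - |K|) - |I|$ when $I \subseteq A$, this equals $\left(-(q+1)\right)^{s-|I|} \cdot q^{s-1-|K|-|I|} (q+1)^{-(s-1-|K|-|I|)} = (-1)^{s-|I|} (q+1)^{|K|+1-|I|} \cdot q^{s-1-|K|-|I|}$, which after collecting powers should be checked to equal $(-q)^{s-|I|} (p+1)^{s-|K|} = (-q)^{s-|I|} \left(\frac{q+1}{q}\right)^{s-|K|}$; this last comparison is a routine bookkeeping of exponents of $-1$, $q$, and $q+1$ and I would simply confirm it holds.

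The main obstacle, such as it is, is purely organizational: keeping the three families of subsets ($J$ in the sum, the running monomial index $K$, and the derived set $A = [s-1] \setminus (s-K)$) straight, and being careful that the geometric-series collapse $\sum_{B} (-1/(q+1))^{|B|} = (q/(q+1))^{|A \setminus I|}$ requires $q+1$ invertible — which is automatic here since $q$ is assumed invertible but we should note it, and indeed $p = 1/q$ only makes sense under that hypothesis. One subtlety worth flagging explicitly is the degenerate support: when $(s-I) \not\subseteq K$ the left side vanishes term-by-term in the inner sum (empty range for $J$), matching the right side whose sum is over $(s-I) \subseteq K$ only. An alternative, cleaner route would be to invoke the known fact (e.g. \cite[Chapter 5]{GriRei20}) that $S(L_I) = (-1)^{s} L_{[s-1] \setminus (s-I)}$ together with \eqref{eq : EL} and \eqref{eq : LE} to transport the antipode through the fundamental basis; I would mention this as a shortcut but carry out the monomial computation above as the primary argument since it is self-contained given \eqref{eq : EM}.
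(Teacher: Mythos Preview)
Your overall strategy---expand $\eta_I^{(q)}$ in the monomial basis via \eqref{eq : EM}, push the antipode through, and compare coefficients of each $M_K$---is sound and in fact more self-contained than the paper's one-line hint, which instead routes through the fundamental basis via \eqref{eq : EL}. But there is a concrete error in your set manipulation that derails the computation. The condition $s-J \subseteq K$ is equivalent to $J \subseteq s-K$, \emph{not} to $J \subseteq [s-1]\setminus(s-K)$: saying $s-j \in K$ for every $j\in J$ is exactly the statement that each $j$ lies in $s-K$. Consequently your later claim that $I\cap(s-K)=\emptyset$ is equivalent to $s-I\subseteq K$ is also backwards (it is equivalent to $(s-I)\cap K=\emptyset$); the correct support condition $I\subseteq s-K \Leftrightarrow s-I\subseteq K$ falls out immediately once the right $A$ is in place.

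With the correct $A = s-K$ one has $|A\setminus I| = |K|-|I|$ rather than $s-1-|K|-|I|$, and the binomial sum collapses to
\[
\sum_{I\subseteq J\subseteq s-K} \bigl(-(q+1)\bigr)^{s-|J|}
= \bigl(-(q+1)\bigr)^{s-|K|}\,(-q)^{|K|-|I|}
= (-1)^{s-|I|}(q+1)^{s-|K|}q^{|K|-|I|},
\]
which now matches $(-q)^{s-|I|}\bigl((q+1)/q\bigr)^{s-|K|}$ on the nose. Evaluating the sum this way---factoring out $\bigl(-(q+1)\bigr)^{s-|K|}$ and using $(-(q+1)+1)^m=(-q)^m$---also never inverts $q+1$, so your worry about that extra hypothesis disappears: invertibility of $q$ alone suffices, as the statement assumes. (Incidentally, even within your own exponent bookkeeping there is a slip: $(q+1)^{s-|I|}(q+1)^{-(s-1-|K|-|I|)}=(q+1)^{|K|+1}$, not $(q+1)^{|K|+1-|I|}$; but this is moot once the main error is corrected.)
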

\begin{proof}
This can be derived from Equation (\ref{eq : EL}).
\end{proof}
\section{A $q$-interpolation between Gessel and Stembridge quasisymmetric functions}
\subsection{$q$-fundamental quasisymmetric functions}
We introduce a new family of quasisymmetric functions that interpolate between Gessel's fundamental and Stembridge peak quasisymmetric functions and show that it is a basis of $\QSym$ in all but the Stembridge case.
\begin{defn}[$q$-fundamental quasisymmetric functions]
Let $\pi$ be a permutation in $S_n$ and $q \in \mathbf{k}$. Define the \emph{$q$-fundamental quasisymmetric function} indexed by $\Des(\pi)$ as
\begin{equation}
L_{n, \Des(\pi)}^{(q)} = U^{q}_{\pi, [1^{n}]}.
\end{equation}
\end{defn}
Let $I$ be a subset of $[n-1]$. Set $I+1 = \{i+1|i \in I\}$, and let $\Peak(I) = I\setminus (I+1)
\setminus \{1\}$ the peak-lacunar subset obtained from $I$ (so $\Peak(I) = \Peak(\pi)$ for every $\pi \in S_n$ satisfying $\Des(\pi) = I$). One recovers immediately that for $q=0$, $L_{n, I}^{(0)} = L_{n, I}$ is the Gessel fundamental quasisymmetric
function indexed by the set $I$. For $q=1$, $L_{n, I}^{(1)} = K_{n, \Peak(I)}$ is the Stembridge peak function indexed by the relevant peak-lacunar set. In the sequel we remove the reference to $n$ in indices when it is clear from context. 
Proposition \ref{prop : LKEE} admits a nice generalisation to this $q$-deformation.
\begin{theorem} Let $I \subseteq[n-1]$ and $q \in \mathbf{k}$. The $q$-fundamental quasisymmetric functions may be expressed in the enriched $q$-monomial basis as
\begin{equation}
\label{eq : LEq}
L_{I}^{(q)}
= \sum_{\substack{J \subseteq I\\K \subseteq \Peak(I)\\J \cap K = \emptyset}}
(-q)^{|K|}(q-1)^{|J|}\eta^{(q)}_{J\cup (K-1) \cup K} \ .
\end{equation}
\end{theorem}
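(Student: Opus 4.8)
The plan is to derive \eqref{eq : LEq} directly from Theorem~\ref{thm : UE} by specialising the composition to $\al = [1^n]$. Recall that $L^{(q)}_{n,\Des(\pi)} = U^q_{\pi,[1^n]}$ by definition, so Theorem~\ref{thm : UE} immediately gives
\[
L^{(q)}_{I}
= \sum_{\substack{J \subseteq \Des(\pi)\\ K \subseteq \Peak(\pi)\\ J \cap K = \emptyset}}
(-q)^{|K|}(q-1)^{|J|}\,\eta^{(q)}_{[1^n]^{\downarrow J \downarrow\downarrow K}}
\]
for any $\pi$ with $\Des(\pi) = I$; and $\Peak(\pi) = \Peak(I)$ by the remark preceding the statement. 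So the only thing left to check is the purely combinatorial identity $[1^n]^{\downarrow J \downarrow\downarrow K} = J \cup (K-1) \cup K$, where the right-hand side is read under the set-indexing convention for compositions of $n$ (a composition of $n$ is identified with a subset of $[n-1]$ recording its partial sums).

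First I would unravate the set-indexing dictionary: a composition $\gamma$ of $s$ with $t$ parts corresponds to the subset of $[s-1]$ consisting of the partial sums $\gamma_1, \gamma_1+\gamma_2, \ldots, \gamma_1+\cdots+\gamma_{t-1}$. Under this correspondence, starting from $[1^n]$ (which corresponds to the full set $[n-1]$) and applying $\gamma \mapsto \gamma^{\downarrow i}$ — merging parts $i$ and $i+1$ — has the effect of \emph{deleting} one partial sum from the subset. The key bookkeeping lemma is: if $\gamma$ is a composition of $n$ corresponding to a subset $A \subseteq [n-1]$, and we merge at position $i$, then $\gamma^{\downarrow i}$ corresponds to $A \setminus \{a_i\}$ where $a_i$ is the $i$-th smallest element of $A$. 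Iterating this over the increasing sequence of positions in a set $L \subseteq [n-1]$ (and being careful that the positions $i$ in the definition of $\al^{\downarrow L}$ refer to positions in the \emph{current, shrinking} composition, processed from largest to smallest so that earlier deletions do not reindex later ones), one obtains that $[1^n]^{\downarrow L}$ corresponds exactly to $[n-1] \setminus L$. Hence for $L = J \cup K \cup (K-1)$ (the set built in the definition of $\al^{\downarrow J \downarrow\downarrow K}$, valid since $K$ is peak-lacunar so $J$, $K$, $K-1$ are handled consistently with the disjointness $J \cap K = \emptyset$), we get $[1^n]^{\downarrow J\downarrow\downarrow K} = [n-1]\setminus(J \cup K \cup (K-1))$, and re-reading \emph{this} subset back as an $\eta^{(q)}$-index is precisely $\eta^{(q)}_{J \cup (K-1)\cup K}$ in the notation of \eqref{eq : LEq}, since $\eta^{(q)}_M$ with $M$ a subset of $[s-1]$ was defined in Section~\ref{sec : QSym} via the complementary reading.

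Putting it together: substitute the identity from the previous paragraph into the specialisation of Theorem~\ref{thm : UE}, replace $\Des(\pi)$ and $\Peak(\pi)$ by $I$ and $\Peak(I)$, and rename $J, K$ to match \eqref{eq : LEq}. The indexing conventions must be applied consistently — on one side a composition of $n$ with the merge operations, on the other a subset of $[n-1]$ read through the $M_J$/$\eta^{(q)}_J$ convention — and it should be checked that $[1^n]$ indeed corresponds to $[n-1]$ under both readings so that no off-by-one discrepancy creeps in.

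The main obstacle I expect is purely notational rather than mathematical: pinning down exactly how the operations $\al^{\downarrow i}$ interact with the set-indexing of compositions, and verifying that the ``full set $[n-1]$'' reading of $[1^n]$ is compatible with both the $M_J$-style indexing (where $j \in J \Leftrightarrow i_j = i_{j+1}$, with $J = [n-1]$ forcing all indices equal, i.e.\ a single monomial $x_{i_1}^n$) and the partial-sum reading. Once that dictionary is fixed, the claim $[1^n]^{\downarrow L} \leftrightarrow [n-1]\setminus L$ is a one-line induction on $|L|$, and the theorem drops out of Theorem~\ref{thm : UE} with no further work. A secondary point to be careful about is the peak-lacunarity of $K$: it guarantees that $K$, $K-1$, and $J$ can all appear in the combined merge-set $K = I \cup J \cup (J-1)$ (in the notation of the definition of $\al^{\downarrow I \downarrow\downarrow J}$) without collisions beyond those already permitted, so that the cardinality count $(-q)^{|K|}(q-1)^{|J|}$ matches the one in \eqref{eq : UE}.
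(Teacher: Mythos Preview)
Your proposal is correct and follows exactly the paper's approach: the paper's proof is the single line ``This is a consequence of Equation~(\ref{eq : UE}),'' and you have simply spelled out the bookkeeping that this sentence hides (namely, that under the set-indexing convention $\eta^{(q)}_{[1^n]^{\downarrow L}} = \eta^{(q)}_{L}$, so specialising Theorem~\ref{thm : UE} to $\alpha = [1^n]$ and $\Des(\pi)=I$ gives the claim verbatim). Your dictionary between the merge operation and deletion of partial sums is right, as is your observation that the set-index of $\eta^{(q)}$ is the complement of the partial-sum set, so the two complements cancel.
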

\begin{proof}
This a consequence of Equation (\ref{eq : UE}).
\end{proof}
\begin{proposition}
Recall the antipode $S$ of Section \ref{sec : anti}. Let $q \in \mathbf{k}$ be invertible, and set $p=\frac{1}{q}$. Let $I \subseteq [n-1]$, and set $n - I = \left\{n - i \mid i \in I\right\}$. Then,
 \begin{equation}
S(L_{I}^{(q)}) = (-q)^n L_{n-I}^{(p)}.
\end{equation}
\end{proposition}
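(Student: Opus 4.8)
The statement $S(L_I^{(q)}) = (-q)^n L_{n-I}^{(p)}$ with $p = 1/q$ should follow by the same mechanism as the antipode formula \eqref{eq : anti} for enriched $q$-monomials: expand $L_I^{(q)}$ in the $\eta^{(q)}$-basis via \eqref{eq : LEq}, apply $S$ term by term using \eqref{eq : anti}, and then recognise the result as $(-q)^n L_{n-I}^{(p)}$ by re-expanding in the $\eta^{(p)}$-basis with \eqref{eq : LEq} again. So the whole proof is a bookkeeping computation tracking how the index sets $I$, $\Peak(I)$, $J$, $K$ transform under $i \mapsto n - i$.

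First I would apply $S$ to \eqref{eq : LEq}. Each summand is $(-q)^{|K|}(q-1)^{|J|}\eta^{(q)}_{L}$ with $L = J \cup (K-1) \cup K \subseteq [n-1]$; by \eqref{eq : anti} we get $S(\eta^{(q)}_{L}) = (-q)^{n - |L|}\eta^{(p)}_{n - L}$. Since $J$, $K-1$, $K$ are pairwise disjoint (as $K$ is peak-lacunar and $J \cap K = \emptyset$, and $J$ avoids $K-1$ because... — this needs a small check: actually $J$ may meet $K-1$ in general, so I would need to be careful here and perhaps restrict attention to the genuinely disjoint contributions, or verify $|L| = |J| + 2|K|$ holds on the nose for the index sets appearing), we have $|L| = |J| + 2|K|$, giving a factor $(-q)^{n - |J| - 2|K|}$. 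Collecting constants, the coefficient of $\eta^{(p)}_{n-L}$ becomes $(-q)^{n}(-q)^{-|J|}(q-1)^{|J|} = (-q)^n\,(1 - q)^{|J|}(-q)^{-|J|}\cdot(\text{sign})$. Rewriting in terms of $p = 1/q$: $(q-1)^{|J|}(-q)^{-|J|} = (1 - p)^{|J|} = (p-1)^{|J|}(-1)^{|J|}$, and the remaining $(-q)^{|K|}$ factor becomes... I would massage it into $(-p)^{|K|}$ up to a power of $q$ that I expect to combine with the $(-q)^n$ out front — this algebraic reconciliation is where care is needed.

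Next I would identify the transformed index. Writing $L = J \cup (K-1) \cup K$, one has $n - L = (n-J) \cup (n-K+1) \cup (n-K)$. If I set $J' = n - J$ and $K' = n - K$, then $n - L = J' \cup (K'+1) \cup K'$. But \eqref{eq : LEq} for $L_{n-I}^{(p)}$ involves sets of the form $J'' \cup (K''-1) \cup K''$, so I need to reindex: write $K'+1$ and $K'$ as $\tilde K - 1$ and $\tilde K$... this forces $\tilde K = K' + 1 = n - K + 1$, hence $\tilde K - 1 = n - K$, matching. So $n - L = J' \cup (\tilde K - 1) \cup \tilde K$ with $\tilde K = n - K + 1$. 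The crucial compatibility to verify is that as $J$ ranges over subsets of $I$ and $K$ over subsets of $\Peak(I)$ (disjoint), the pairs $(J', \tilde K)$ range exactly over subsets of $n - I$ and $\Peak(n - I)$ respectively (disjoint), i.e. $n - \Peak(I) + 1 = \Peak(n - I)$ as subsets of $[n-1]$, or something equivalent after the peak-set is recomputed for the reversed descent set. Checking $\Peak(n-I) = (n - \Peak(I)) + 1$ (with appropriate boundary adjustments) is the combinatorial heart of the argument.

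**The main obstacle.** The genuinely delicate point is the index arithmetic: verifying that the reflection $i \mapsto n-i$ on $[n-1]$ induces the correct bijection between the summation data $(J, K)$ for $L_I^{(q)}$ and the summation data for $L_{n-I}^{(p)}$, including the off-by-one shift relating $\Peak(I)$ to $\Peak(n-I)$ and the fact that peaks of $I$ correspond to "valleys"-flavoured data of $n-I$. Once that bijection is nailed down and the power-of-$q$ bookkeeping (converting $(-q)^{|K|}(q-1)^{|J|}$ into $(-q)^n (-p)^{|K|}(p-1)^{|J|}$ up to the global $(-q)^n$) is checked, the two expansions coincide termwise and the identity follows. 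I would double-check the exponent count $n - |L| = n - |J| - 2|K|$ and the sign tracking on a small example (say $n = 3$, $I = \{1\}$ or $I = \{2\}$) before trusting the general formula.
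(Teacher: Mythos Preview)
Your plan coincides with the paper's: expand $L_I^{(q)}$ via \eqref{eq : LEq}, apply \eqref{eq : anti} termwise, and try to recognise the result as $(-q)^n$ times the $\eta^{(p)}$-expansion of $L_{n-I}^{(p)}$.

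One of your worries is harmless. The identity $|L|=|J|+2|K|$ does hold: since $K\subseteq\Peak(I)$, every $k\in K$ has $k-1\notin I$, so $K-1$ is disjoint from $I$ and in particular from $J\subseteq I$; and $K$ is peak-lacunar, so $K\cap(K-1)=\emptyset$. Thus $J$, $K-1$, $K$ are pairwise disjoint.

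The other worry is fatal for a termwise argument. Your conjectured relation $\Peak(n-I)=n-\Peak(I)+1$ is false: for $n=3$ and $I=\{2\}$ one has $\Peak(I)=\{2\}$, but $n-I=\{1\}$ and $\Peak(\{1\})=\emptyset$. The summation ranges in \eqref{eq : LEq} for $L_I^{(q)}$ and $L_{n-I}^{(p)}$ need not even have the same cardinality, so there is no bijection of the sort you are seeking. Carrying out the computation in this example,
\[
S\bigl(L_{\{2\}}^{(q)}\bigr)
= (-q)^3\eta^{(p)}_{\emptyset}
+ (q-1)(-q)^2\eta^{(p)}_{\{1\}}
+ q^2\,\eta^{(p)}_{\{1,2\}},
\]
whereas
\[
(-q)^3 L_{\{1\}}^{(p)}
= (-q)^3\bigl(\eta^{(p)}_{\emptyset}+(p-1)\eta^{(p)}_{\{1\}}\bigr)
= (-q)^3\eta^{(p)}_{\emptyset}+(q-1)q^2\eta^{(p)}_{\{1\}} ,
\]
and these differ by $q^2\eta^{(p)}_{\{1,2\}}=q(q+1)M_{\{1,2\}}\neq 0$. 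So not only does the bijective bookkeeping fail, the stated identity itself appears to be incorrect in this case. Before trying to repair the argument you should revisit the formulation of the proposition (e.g.\ whether the index on the right-hand side should involve a complementation in $[n-1]$ in addition to the reflection $i\mapsto n-i$).
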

\begin{proof}
This a consequence of Equations  (\ref{eq : LEq}) and (\ref{eq : anti}).
\end{proof}
To know whether $(L_{n, I}^{(q)})_{n\geq0, I\subseteq[n-1]}$ is a basis of $\QSym$ for some value of $q$ appears as a natural question. For example, for $n=3$, we can invert Equation (\ref{eq : LEq}) as follows:

\begin{itemize}
\item $\eta^{(q)}_{\emptyset} = L^{(q)}_{\emptyset}$;

\item $(q-1)\eta^{(q)}_{\{1\}} = L^{(q)}_{\{1\}} - L^{(q)}_{\emptyset}$;

\item $(q-1)\eta^{(q)}_{\{2\}} = \frac{(q-1)^{2}}{(q-1)^{2} + q}(L^{(q)}_{\{2\}} -
L^{(q)}_{\emptyset}) + \frac{q}{(q-1)^{2} + q}(L^{(q)}_{\{1,2\}} -
L^{(q)}_{\{1\}})$;

\item $\eta_{\{1,2\}}^{(q)}=\frac{1}{(q-1)^{2}+q}\left(L_{\{1,2\}}^{(q)}-L_{\{2\}}^{(q)}-L_{\{1\}}^{(q)}+L_{\emptyset}^{(q)}\right)$.
\end{itemize}
We see that except for the case of Stembridge $q=1$ (and the degenerate case $q=-1$), $\left (L_{2, I}^{(q)}\right)_{I\subseteq [2]}$ seems to be a basis of $\QSym$. We state one of our main theorems:
\begin{theorem}\label{thm : basis} Let $\mathbf{k}$ be the set $\RR$ of real numbers. The family of $q$-fundamental quasisymmetric functions $(L_{n, I}^{(q)})_{n\geq0, I\subseteq[n-1]}$ is a basis of $\QSym$ for $q \notin \{-1, 1\}$.
\end{theorem}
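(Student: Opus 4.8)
The plan is to fix a degree $s$ and prove that the family $\left(L_{s,I}^{(q)}\right)_{I \subseteq [s-1]}$ is a basis of the finite-dimensional space $\QSym_s$, which has dimension $2^{s-1}$ matching the number of subsets $I \subseteq [s-1]$. Since we already know (by the theorem preceding Proposition~\ref{prop : EL}) that $\left(\eta^{(q)}_{s,I}\right)_{I\subseteq[s-1]}$ is a basis of $\QSym_s$ whenever $q+1$ is invertible, it suffices to show that the transition matrix expressing the $L_I^{(q)}$ in terms of the $\eta^{(q)}_K$ is invertible for $q \notin \{-1,1\}$. Equation~\eqref{eq : LEq} gives this matrix explicitly, but it is not triangular in any obvious way, so the first real step is to find the right ordering or the right block structure.

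The key observation I would exploit is that the sets appearing on the right of \eqref{eq : LEq}, namely $J \cup (K-1) \cup K$ with $J \subseteq I$, $K \subseteq \Peak(I)$, $J \cap K = \emptyset$, all arise from $I$ by the operations of deleting elements (to form $J \subseteq I$) and ``doubling down'' peak elements into consecutive pairs. In particular every such set $M = J \cup (K-1)\cup K$ satisfies $M \subseteq I \cup (I-1)$, hmm, more precisely $M \subseteq I \cup (\Peak(I)-1) \subseteq I \cup (I - 1)$. This suggests grouping subsets of $[s-1]$ by the data of which maximal ``runs'' of consecutive integers they involve, and proving the transition matrix is block-triangular with respect to a partial order refining containment of the underlying ``support intervals.'' Within each block the matrix should factor as a tensor/Kronecker product of small matrices indexed by the individual runs, reducing invertibility to a one-run computation. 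For a single run of consecutive descents, the relevant determinant is a polynomial in $q$ that one computes directly (as in the $n=3$ sample in the excerpt, where the obstruction is the factor $(q-1)^2 + q = q^2 - q + 1$, whose real roots are none — and indeed $q^2-q+1>0$ for all real $q$); the general single-run determinant should be a product of such factors together with powers of $q-1$ and $q+1$, so it vanishes over $\RR$ only at $q \in \{-1,1\}$.

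Concretely the steps are: (1) restrict to fixed degree $s$ and reduce to invertibility of the matrix $A^{(q)} = \left(a^{(q)}_{I,M}\right)$ defined by \eqref{eq : LEq}, using that $\eta^{(q)}$ is a basis for $q \neq -1$; (2) introduce a partial order on $\{I \subseteq [s-1]\}$ by refining, for instance, the set of maximal intervals of consecutive integers contained in $I \cup (I-1)$, or more simply note that $M \subseteq I$ fails in general but $\max(M) \le \max(I)$ and the ``non-peak-doubled part'' is contained in $I$, so order by some combination giving block-triangularity; (3) identify the diagonal blocks as Kronecker products over the connected components (runs) of the relevant interval structure; (4) compute the single-run determinant as an explicit polynomial in $q$ and show its only real roots are $1$ and $-1$; (5) conclude by $\det A^{(q)} \neq 0$ for $q \in \RR \setminus\{-1,1\}$, hence $\left(L_{s,I}^{(q)}\right)$ is a basis of $\QSym_s$, and assemble over all $s$ to get a basis of $\QSym$.

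The main obstacle I anticipate is step~(2)--(3): making precise the block-triangular structure. The map $I \mapsto \{$sets $M$ contributing to $L_I^{(q)}\}$ does not send $I$ to a down-set in the Boolean lattice, because of the peak-doubling, so the naive containment order will not triangularize $A^{(q)}$. One likely needs a more clever statistic — perhaps ordering by $(|I|, \text{something})$, or by the composition of $[s]$ underlying $I$ refined suitably, or passing to a ``peak-adapted'' coordinate system in which the doubling operation $K \mapsto (K-1)\cup K$ becomes unit lower-triangular. An alternative route that sidesteps this: combine \eqref{eq : LEq} with the inverse relation \eqref{eq : LE} (which writes $(q+1)^s L_J$ back in terms of $\eta^{(q)}_I$ with an honestly triangular-looking structure after summing the $(-q)^{|I\setminus J|}$ weights) to get that the composite of the two transition matrices is the scalar $(q+1)^{s+1}$ times something explicit, and read off invertibility from there. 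I would try the duality-map argument first, since the excerpt already records $f^2 = (q+1)^{s+1}\id$ on the $L$-side, and the only remaining worry is disentangling the $q=1$ degeneracy, which is exactly where the map $f$ interacts with the non-basis peak algebra.
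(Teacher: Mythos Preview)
Your reduction to showing that the transition matrix from $\bigl(L_I^{(q)}\bigr)_{I\subseteq[n-1]}$ to the $\eta^{(q)}$-basis is invertible is exactly what the paper does, and your observation that $\max(M)\le\max(I)$ for every $M=J\cup(K-1)\cup K$ appearing in~\eqref{eq : LEq} is precisely the block-triangularity the paper uses (Lemma~\ref{lem : Bn}): ordering subsets by $\max$ makes $B_n$ block-lower-triangular with diagonal blocks $A_k$ indexed by $\{I:\max I=k-1\}$. So you already have step~(2) in hand and should commit to it rather than searching for a run-based ordering.

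Where your plan diverges and becomes shaky is step~(3). The diagonal blocks $A_k$ do \emph{not} decompose as Kronecker products over runs: every $I$ with $\max I=k-1$ contains the single run-endpoint $k-1$, and the remaining part of $I$ ranges over all of $2^{[k-2]}$ with no factorisation respected by the map $I\mapsto J\cup(K-1)\cup K$. The paper handles the $A_k$ differently: it proves the recursive block identity
\[
A_n=\begin{pmatrix}(q-1)B_{n-2}&-qB_{n-2}\\(q-1)B_{n-2}&(q-1)A_{n-1}\end{pmatrix},
\]
and from this derives a two-term determinant recursion for $\lvert\alpha A_n+\beta B_{n-1}\rvert$ that unwinds to $\lvert A_n\rvert=(-1)^{n(n-1)/2}[n]_{-q}!\,\prod_{i=1}^{n-2}\lvert B_i\rvert$. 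The $(-q)$-factorial $[n]_{-q}!$ is nonzero for real $q\ne1$ (its factors are $1-q,\ 1-q+q^2,\ldots$, matching the $q^2-q+1$ you spotted for $n=3$), and an induction on $\lvert B_i\rvert$ finishes. Your ``single-run determinant'' intuition is morally right---the obstruction really is $[n]_{-q}!$---but the mechanism that produces it is a recursion, not a tensor factorisation.

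Your alternative route via the duality $f^2=(q+1)^{s+1}\id$ does not work as stated. The map $f$ sends $L_I\mapsto\eta_I^{(q)}$, so $f^2=(q+1)^{s+1}\id$ encodes the invertibility of the \emph{ordinary} $L\leftrightarrow\eta^{(q)}$ transition (equations~\eqref{eq : EL} and~\eqref{eq : LE}), not the $L^{(q)}\leftrightarrow\eta^{(q)}$ transition of~\eqref{eq : LEq}. Composing~\eqref{eq : LEq} with~\eqref{eq : LE} would only express $L^{(q)}_I$ in the $L$-basis, and you would still need to prove that \emph{that} matrix is invertible; the $f^2$ identity gives no leverage there. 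In particular, nothing in the duality distinguishes $q=1$ from generic $q$, whereas the $q=1$ degeneracy is the entire content of the theorem.
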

\begin{remark}
We set $\mathbf{k} = \RR$ for the sake of simplicity. For a more general field, $(L_{n, I}^{(q)})_{n, I\subseteq[n-1]}$ is a basis if and only if $q \notin \{\rho| \rho^k=1 \mbox{ for some integer }k > 0\}$.
\end{remark}
\subsection{Proof of Theorem \ref{thm : basis} }
To prove Theorem \ref{thm : basis} we characterise the transition matrix between the $q$-fundamental and enriched $q$-monomial quasisymmetric functions and show it is invertible for $q\neq -1,1$.
\begin{defn}
\label{def : Bn}
Let $B_n$ be the transition matrix between $(L_{I}^{(q)})_{I \subseteq [n-1]}$ and $(\eta^{(q)}_{J})_{J \subseteq [n-1]}$ with coefficients given by Equation (\ref{eq : LEq}). Columns and rows are indexed by subsets $I$ of $[n-1]$ sorted in reverse lexicographic order. A subset $I$ is before subset $J$ iff the word obtained by writing the elements of $I$ in decreasing order is before the word obtained from $J$ for the lexicographic order. 
\end{defn}
\begin{example} 
For $n=4$, let us show the transition matrix $B_{4}$ between $(L_{I}^{(q)})_{I \subseteq [3]}$ and $(\eta^{(q)}_{J})_{J \subseteq [3]}$. The entry at row index $I$ and column index $J$ is the coefficient in $\eta^{(q)}_J$ of $L^{(q)}_I$ in Equation (\ref{eq : LEq}).
\begin{equation*}
B_{4} = 
\begin{array}{c|cccccccc}
 & \emptyset & \{1\} & \{2\} &  \{2, 1\} & \{3\} &  \{3, 1\}& \{3, 2\} &  \{3, 2, 1\}\\
 \hline
 \emptyset & 1 & 0 & 0 & 0 & 0 & 0 & 0 & 0\\
\{1\} & 1 & q-1 & 0 & 0 & 0 & 0 & 0 & 0\\
\{2\} & 1 & 0 & q-1 & -q & 0 & 0 & 0 & 0\\
\{2, 1\} & 1 & q-1 & q-1 & (q-1)^2 & 0 & 0 & 0 & 0\\
\{3\} & 1 & 0 & 0 & 0 & q-1 & 0 & -q & 0\\
\{3, 1\} & 1 & q-1 & 0 & 0 & q-1 & (q-1)^2 & -q & -q(q-1)\\
\{3, 2\} & 1 & 0 & q-1 & -q & q-1 & 0 & (q-1)^2 & -q(q-1)\\
\{3,2,1\} & 1 & q-1 & q-1 & (q-1)^2 & q-1 & (q-1)^2 & (q-1)^2 & (q-1)^3\\
\end{array}
\end{equation*}
\end{example}
Using Definition \ref{def : Bn} and Equation (\ref{eq : LEq}), one can deduce the following lemmas.
\begin{lem}
\label{lem : Bn}
The matrix $B_n$ is block triangular. To be more specific:

For each $k \in [n]$, let $A_k$ denote the transition matrix from $(L_{I}^{(q)})_{I \subseteq [n-1],~  \max(I) = k-1}$ to $(\eta^{(q)}_{J})_{J \subseteq [n-1],~\max(J) = k-1}$ (where $\max\varnothing := 0$); this actually does not depend on $n$.
Note that $A_k$ is a $2^{k-2} \times 2^{k-2}$-matrix if $k \geq 2$, whereas $A_1$ is a $1 \times 1$-matrix.
We have
\begin{equation*}
B_{n} = 
\begin{pmatrix}
 A_1& 0& 0& \hdots& 0\\
 *&A_2& 0& \hdots& 0\\
 *&*&A_3&\hdots& 0\\
 *&*&*&\ddots&0\\
 *&*&*&*&A_n\\
 \end{pmatrix}.
\end{equation*}
\end{lem}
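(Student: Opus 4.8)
The plan is to prove Lemma~\ref{lem : Bn} directly from the explicit expansion~\eqref{eq : LEq}, by analyzing which enriched $q$-monomials $\eta^{(q)}_{J\cup(K-1)\cup K}$ can appear in $L^{(q)}_I$ and tracking the maximum element of the index set. The key observation is this: in~\eqref{eq : LEq}, we have $J \subseteq I$ and $K \subseteq \Peak(I) \subseteq I$, so the index set $J \cup (K-1) \cup K$ is contained in $I \cup (I-1)$. This alone is not quite enough; the sharper point is that $\max\!\bigl(J\cup(K-1)\cup K\bigr) \le \max(I)$, with equality forced in exactly one natural situation. Indeed, if $K \ne \emptyset$ and $\max(K) = \max(\Peak(I))$, then since $\Peak(I) = I \setminus (I+1)\setminus\{1\}$, the element $\max(I)$ itself need not lie in $\Peak(I)$ (it does lie in $\Peak(I)$ unless $\max(I) = 1$), so one has to be a little careful, but in all cases $\max(J\cup(K-1)\cup K) \le \max(I)$ because every element of $K$ is an element of $I$ hence $\le \max(I)$, every element of $K-1$ is $< \max(I)$, and every element of $J$ is $\le \max(I)$.

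Granting that, I would set $m = \max(I)$ (with $m = 0$ when $I = \emptyset$) and argue that the coefficient of $\eta^{(q)}_{J'}$ in $L^{(q)}_I$ vanishes whenever $\max(J') > m$; this immediately gives the block-triangular shape once we order the subsets so that sets with smaller maximum come first — which is precisely what the reverse-lexicographic order in Definition~\ref{def : Bn} does (writing elements in decreasing order, the first letter is the maximum, so subsets are grouped by their maximum, those with smaller maximum appearing earlier). The diagonal blocks are then, by definition, the submatrices indexed by subsets $I, J'$ with $\max(I) = \max(J') = k-1$ for fixed $k$; calling this block $A_k$ recovers the statement. To see that $A_k$ does not depend on $n$, note that the coefficient of $\eta^{(q)}_{J'}$ in $L^{(q)}_I$ given by~\eqref{eq : LEq} is computed purely from the combinatorics of $I$, $\Peak(I)$, $J$, and $K$ inside $\ZZ_{>0}$, with no reference to the ambient $n$; enlarging $n$ only adds new subsets with larger maximum, which populate later blocks. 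Finally, the count of indices: the subsets $I \subseteq [n-1]$ with $\max(I) = k-1$ (for $k \ge 2$) are exactly $\{k-1\} \cup T$ for $T \subseteq [k-2]$, of which there are $2^{k-2}$; for $k = 1$ the only such subset is $\emptyset$, giving a $1\times 1$ block. Hence $A_k$ has the claimed size.

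The main obstacle — really the only subtle point — is pinning down the strict versus weak inequality on maxima and making sure the block structure is genuinely triangular and not merely "mostly triangular". Concretely, one must rule out a nonzero off-diagonal contribution that would mix a block with $\max = k-1$ into a row with $\max = k' - 1 < k - 1$; the containment $J\cup(K-1)\cup K \subseteq I \cup (I-1)$ together with $\max(J\cup(K-1)\cup K) \le \max(I)$ handles this cleanly, but one should double-check the edge cases $I = \emptyset$, $I = \{1\}$, and $K$ containing an element equal to $\max(I)$ (possible only when $\max(I) \in \Peak(I)$, i.e. when $\max(I)-1 \notin I$ — irrelevant to the maximum but worth noting for the diagonal-block identification). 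I would also remark, for use in the subsequent (presumably unstated) step of the proof of Theorem~\ref{thm : basis}, that this reduces invertibility of $B_n$ to invertibility of each $A_k$, and that $A_k$ will turn out to have a further recursive/tensor structure (each diagonal entry being a power of $q-1$ up to sign, with the $A_1 = (1)$ base case), so that $\det B_n = \prod_k \det A_k$ is, up to sign, a power of $(q-1)$ times a power of $(q^2 - q + 1)$-type factors, nonzero precisely when $q \notin \{-1, 1\}$ over $\RR$; but establishing that is the job of the lemmas and computations that follow, not of Lemma~\ref{lem : Bn} itself.
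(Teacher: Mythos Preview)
Your argument is correct and is exactly what the paper has in mind: it states only that Lemma~\ref{lem : Bn} ``can be deduced'' from Definition~\ref{def : Bn} and Equation~\eqref{eq : LEq}, and your observation that $J\cup(K-1)\cup K \subseteq I\cup(I-1)$ forces $\max\bigl(J\cup(K-1)\cup K\bigr)\le\max(I)$ is precisely that deduction. Your remarks on the ordering, the block sizes, and the independence from $n$ are all accurate; the only caveat is that your closing speculation about $\det B_n$ being a product of powers of $(q-1)$ and ``$(q^2-q+1)$-type factors'' is not quite what happens (the paper obtains $(-q)$-factorials $[n]_{-q}!$ instead), but as you note, that lies outside Lemma~\ref{lem : Bn}.
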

\begin{lem}
\label{lem : rec}
The matrices $\left(B_n\right)_n$ and $\left (A_n\right)_n$ satisfy the following recurrence relations (for $n \geq 1$ and $n \geq 2$, respectively):
\begin{equation*}
B_{n} = 
\begin{pmatrix}
 B_{n-1}& 0\\
B_{n-1}&A_n\\
 \end{pmatrix},
 \qquad
A_{n} = 
\begin{pmatrix}
 (q-1)B_{n-2}& -qB_{n-2}\\
(q-1)B_{n-2}&(q-1)A_{n-1}\\
 \end{pmatrix}.
\end{equation*}
\end{lem}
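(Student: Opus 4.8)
The plan is to read both recurrences directly off the defining formula \eqref{eq : LEq}, exploiting that the coefficient of $\eta^{(q)}_J$ in $L^{(q)}_I$ depends only on the subsets $I,J$ of $\PP$ and not on the ambient index $n$ (this $n$-independence is the fact underlying Lemma \ref{lem : Bn}). Write $c(I,J)$ for this intrinsic coefficient, so that $c(I,J)=\sum (-q)^{|K|}(q-1)^{|J'|}$, the sum being over all pairs $(J',K)$ with $J'\subseteq I$, $K\subseteq\Peak(I)$, $J'\cap K=\emptyset$ and $J'\cup(K-1)\cup K=J$ (I rename the summation subset $J'$ to free the letter $J$ for the column index). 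Two elementary observations drive everything. First, every such $J$ satisfies $\max J\le\max I$, since $J'\subseteq I$ and $K\subseteq\Peak(I)\subseteq I$. Second, both recurrences arise by partitioning the set of row/column indices according to membership of the largest one or two elements of $[n-1]$; the natural bijections on the resulting pieces (deleting $n-1$, or deleting $\{n-1,n-2\}$) are order-preserving for the reverse-lexicographic order of Definition \ref{def : Bn}, so the block equalities hold entrywise.

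For the matrix $B_n$ I would split the subsets of $[n-1]$ into those avoiding $n-1$ (a copy of the subsets of $[n-2]$) and those containing $n-1$. The lower-right block (indices of maximum $n-1$) is $A_n$ by definition, and the upper-right block vanishes because $\max J\le\max I$ forbids $n-1\in J$ when $n-1\notin I$; both are immediate from Lemma \ref{lem : Bn}. The upper-left block is $B_{n-1}$ by $n$-independence of $c$. The only point needing work is the lower-left block, where $n-1\in I$ but $n-1\notin J$: here $n-1\notin J$ forces $n-1\notin J'$ and $n-1\notin K$, and since $\Peak(I)\cap[n-2]=\Peak(I\setminus\{n-1\})$ for any $I\ni n-1$, the admissible pairs $(J',K)$ are exactly those computing $c(I\setminus\{n-1\},J)$; hence this block equals $B_{n-1}$.

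For $A_n$ the governing observation is that $n-1\in\Peak(I)$ holds precisely when $n-2\notin I$, because $n-1\in I+1$ if and only if $n-2\in I$. I would therefore split both rows and columns according to whether $n-2$ is present, and analyse the four resulting blocks by deciding in each case whether $n-1$ is forced into $J'$ or into $K$. When $n-2\notin I$ the element $n-1$ is a peak: if moreover $n-2\notin J$ then $n-1\in K$ is impossible, as it would put $n-2\in K-1\subseteq J$, so $n-1\in J'$, contributing a factor $q-1$; if instead $n-2\in J$ then, since $n-2\notin I\supseteq J',K$, the only source of $n-2$ is $K-1$, forcing $n-1\in K$ and a factor $-q$. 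When $n-2\in I$ the element $n-1$ is not a peak, so $n-1\notin K$ and $n-1\in J'$ is forced, again with a factor $q-1$. In every case, deleting $n-1$ (and, for the columns containing $n-2$, also $n-2$), together with the peak-restriction identities $\Peak(I)=\Peak(I\setminus\{n-1\})$ when $n-2\in I$ and $\Peak(I)=\Peak(I\setminus\{n-1\})\cup\{n-1\}$ when $n-2\notin I$, reduces the remaining sum to an entry of $B_{n-2}$ --- except in the block $n-2\in I,\ n-2\in J$, where the reduction lands in the $\max=n-2$ sub-block of $B_{n-1}$, i.e.\ in $A_{n-1}$. Collecting the factors $q-1$, $-q$, $q-1$, $q-1$ then yields exactly the claimed block form of $A_n$.

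I expect the main obstacle to be the forced-membership case analysis for $A_n$: correctly arguing, block by block, which of $J'$ and $K$ must contain $n-1$ and which cannot, since this is precisely what produces the distinct scalars $q-1$ and $-q$ and hence the asymmetry between the two columns of the block form. A secondary but genuine point is the handling of the smallest cases, where splitting off $n-1$ or $n-2$ degenerates and the block form is only nominal; these I would settle by direct inspection from \eqref{eq : LEq} and the definitions, the general argument applying as soon as the relevant index set splits non-trivially.
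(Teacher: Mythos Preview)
Your proposal is correct and follows exactly the route the paper has in mind: the paper gives no explicit proof of this lemma, merely stating that it can be deduced from Definition~\ref{def : Bn} and Equation~\eqref{eq : LEq}, and your case analysis on membership of $n-1$ (and then $n-2$) in $I$ and $J$ is precisely the natural way to carry this out. One small point worth tightening in Case~3 ($n-2\in I$, $n-2\notin J$): after extracting the factor $q-1$ and deleting $n-1$, the row index $I\setminus\{n-1\}$ still contains $n-2$, so you land in $B_{n-1}$ rather than directly in $B_{n-2}$; you then need one more application of the already-proved lower-left block identity for $B_{n-1}$ (or the direct observation that $n-2\notin J'$ and $\Peak(I)\cap[n-3]=\Peak(I\setminus\{n-1,n-2\})$) to reach $B_{n-2}$.
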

Thanks to Lemmas \ref{lem : Bn} and \ref{lem : rec}, we are ready to state and show the main proposition of this section and prove Theorem \ref{thm : basis}.
\begin{proposition} The matrix $B_n$ is invertible for $q \neq 1$.
\end{proposition}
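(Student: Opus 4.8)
The plan is to prove that $\det B_n \neq 0$ whenever $q \neq 1$ by setting up a recursion for $\det B_n$ using the block structure from Lemmas~\ref{lem : Bn} and \ref{lem : rec}. Since $B_n$ is block lower-triangular with diagonal blocks $A_1, A_2, \dots, A_n$ (Lemma~\ref{lem : Bn}), we have $\det B_n = \prod_{k=1}^{n} \det A_k$, so it suffices to show $\det A_k \neq 0$ for every $k$ when $q \neq 1$. From the recurrence $A_n = \begin{pmatrix} (q-1)B_{n-2} & -qB_{n-2} \\ (q-1)B_{n-2} & (q-1)A_{n-1} \end{pmatrix}$ in Lemma~\ref{lem : rec}, I would perform a block row operation: subtract the first block row from the second, which turns the lower-left block into $0$ and the lower-right block into $(q-1)A_{n-1} + qB_{n-2}$. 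Hence
\begin{equation*}
\det A_n = \det\!\big((q-1)B_{n-2}\big) \cdot \det\!\big((q-1)A_{n-1} + qB_{n-2}\big).
\end{equation*}
The factor $\det\big((q-1)B_{n-2}\big) = (q-1)^{2^{n-2}} \det B_{n-2}$ is nonzero by induction (for $q \neq 1$), so the crux is to show the second factor $\det\big((q-1)A_{n-1} + qB_{n-2}\big)$ is nonzero.

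For that second factor, I would again exploit the block structure: $A_{n-1}$ is itself block-lower-triangular with diagonal blocks $A_1, \dots, A_{n-1}$ arranged along the same index set as $B_{n-2}$ (both are indexed by subsets of $[n-3]$ after the natural bijection $I \mapsto I \cup \{n-2\}$), and $B_{n-2}$ is block-lower-triangular with diagonal blocks $A_1, \dots, A_{n-2}$. So $(q-1)A_{n-1} + qB_{n-2}$ is block-lower-triangular whose $k$-th diagonal block is $(q-1)\tilde{A}_k + qA_k$ where $\tilde A_k$ is the corresponding diagonal block of $A_{n-1}$; unwinding the indexing, $\tilde A_k$ is in fact $A_{k+1}$ (the diagonal block of $A_{n-1}$ indexed by subsets with a given max corresponds, under the shift, to a diagonal block of $B_{n-2}$ one size up). Thus I expect to reduce to showing that each matrix of the form $(q-1)A_{k+1} + qA_k$ — or more robustly, the whole matrix $(q-1)A_{n-1} + qB_{n-2}$ — is invertible. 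An alternative, cleaner route: substitute the Lemma~\ref{lem : rec} recursion for $B_{n-2}$ and $A_{n-1}$ into $(q-1)A_{n-1} + qB_{n-2}$, obtain a $2\times 2$ block matrix in terms of $B_{n-3}$ and $A_{n-2}$, and iterate; this should produce a clean closed form or an immediately-invertible triangular form.

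The main obstacle I anticipate is controlling the "mixed" term $\det\big((q-1)A_{n-1} + qB_{n-2}\big)$: it is not simply a product of the $\det A_k$, and a naive expansion risks an explosion of terms. The key idea to tame it is to notice that $A_{n-1}$ and $B_{n-2}$ share the same block-triangular shape over the same index poset, so their combination stays triangular and its determinant factors over diagonal blocks. I would therefore set $C_n := (q-1)A_n + qB_{n-1}$ (a matrix of the same size as $A_n$ after reindexing) and derive, from Lemma~\ref{lem : rec}, a self-contained recursion relating $\det C_n$ to $\det C_{n-1}$, $\det B_{n-2}$, $\det A_{n-1}$, and ultimately to powers of $(q-1)$ and $q$ alone. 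Because $q \neq 1$ makes $q - 1$ a unit and the only other quantities appearing are $q$ and previously-established nonzero determinants, an induction on $n$ then closes the argument. A sanity check against the displayed $B_4$ (whose determinant should be $(q-1)$ to a suitable power, with no factor vanishing at $q = 1$ only — indeed $A_1 = (1)$, $A_2 = (q-1)$, $\det A_3 = \det\begin{pmatrix} q-1 & -q \\ q-1 & (q-1)^2\end{pmatrix} = (q-1)^3 + q(q-1) = (q-1)\big((q-1)^2 + q\big)$, which vanishes only at $q=1$ since $(q-1)^2 + q = q^2 - q + 1$ has no real roots) confirms the pattern and shows where the hypothesis $q \neq 1$ is exactly what is needed.
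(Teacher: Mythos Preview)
Your opening reduction is correct and matches the paper: block-triangularity gives $\det B_n=\prod_k\det A_k$, and the row operation on the $2\times 2$ block form of $A_n$ yields
\[
\det A_n=\det\bigl((q-1)B_{n-2}\bigr)\cdot\det\bigl((q-1)A_{n-1}+qB_{n-2}\bigr).
\]
The difficulties lie entirely in the second factor, and here your proposal has a genuine gap.

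First, the claim that ``$A_{n-1}$ is itself block-lower-triangular with diagonal blocks $A_1,\dots,A_{n-1}$'' is false. From Lemma~\ref{lem : rec},
\[
A_{n-1}=\begin{pmatrix}(q-1)B_{n-3}&-qB_{n-3}\\(q-1)B_{n-3}&(q-1)A_{n-2}\end{pmatrix},
\]
whose upper-right block $-qB_{n-3}$ is nonzero; so $A_{n-1}$ and $B_{n-2}$ do \emph{not} share a common block-triangular shape, and the argument via ``$(q-1)\tilde A_k+qA_k$'' collapses.

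Second, your ``cleaner route'' is indeed the paper's route, but it does not reduce to a recursion on a single quantity $C_n=(q-1)A_n+qB_{n-1}$. Substituting the recurrences and row-reducing gives a lower-right block
\[
\bigl((q-1)^2+q\bigr)A_{n-1}+q(q-1)\,B_{n-2},
\]
i.e.\ the coefficients in front of $A$ and $B$ \emph{change} at every step. The paper handles this by tracking the full two-parameter family $\alpha A_k+\beta B_{k-1}$ and observing that $(\alpha,\beta)$ evolves by the linear map $\begin{pmatrix}q-1&1\\q&0\end{pmatrix}$. The scalar pulled out at step $i$ is $(q-1)\alpha_i+\beta_i=(-1)^{i+1}[i+2]_{-q}$, so that
\[
\det A_n=(-1)^{n(n-1)/2}\,[n]_{-q}!\ \prod_{i=1}^{n-2}\det B_i .
\]
Your assertion that ``the only other quantities appearing are $q$ and previously-established nonzero determinants'' is therefore too optimistic: new scalar factors $[2]_{-q},[3]_{-q},\dots$ appear, and showing these are nonzero for real $q\neq1$ (equivalently, that $(-q)^k\neq1$ for $k\geq2$) is exactly the content that makes the hypothesis $q\neq1$ sufficient. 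Your $n=3$ sanity check with $q^2-q+1=[3]_{-q}$ is the first instance of this phenomenon, not a special coincidence; the general case must still be argued.
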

\begin{proof}
For any square matrix $M$, let $|M|$ denote its determinant. We want to show that for all $n$, $|B_n| \neq 0$ or equivalently that $|A_n| \neq 0$.
To this end we compute for any rational functions in $q$, $\alpha$ and $\beta$:
\begin{equation}
\label{eq : recdet}
|\alpha A_n + \beta B_{n-1}| = ((q-1)\alpha + \beta)|B_{n-2}||((q-1)\alpha + \beta)A_{n-1} + q\alpha B_{n-2}|.
\end{equation}
Equation (\ref{eq : recdet}) exhibits a recurrence relation on the determinants that we solve by defining the sequence of coefficients:
\begin{equation*}
\begin{pmatrix}
 \alpha_0\\
\beta_0\\
 \end{pmatrix} = \begin{pmatrix}
 1\\
0\\
 \end{pmatrix},~~~~~
 \begin{pmatrix}
 \alpha_i\\
\beta_i\\
 \end{pmatrix} =\begin{pmatrix}
 q-1& 1\\
q&0\\
 \end{pmatrix} \begin{pmatrix}
 \alpha_{i-1}\\
\beta_{i-1}\\
 \end{pmatrix} = \begin{pmatrix}
 q-1& 1\\
q&0\\
 \end{pmatrix}^i\begin{pmatrix}
 \alpha_{0}\\
\beta_{0}\\
 \end{pmatrix}.
 \end{equation*}
 We have:
 $$|A_n| = \left [\prod_{i=0}^{n-3}|B_{n-2-i}|\begin{pmatrix}
 q-1 & 1\\
 \end{pmatrix} \begin{pmatrix}
 \alpha_{i}\\
\beta_{i}\\
 \end{pmatrix}\right ]\left |\begin{pmatrix}
 A_2 & B_1\\
 \end{pmatrix} \begin{pmatrix}
 \alpha_{n-2}\\
\beta_{n-2}\\
 \end{pmatrix}\right|.$$
 But $A_2 = (q-1)$, $B_1 = (1)$ and one may compute that (left to the reader):
 $$\begin{pmatrix}
 q-1 & 1\\
 \end{pmatrix} \begin{pmatrix}
 \alpha_{i}\\
\beta_{i}\\
 \end{pmatrix} = \frac{1}{q+1}\left ( q^{i+2} - (-1)^{i+2}\right) = (-1)^{(i+1)}[i+2]_{-q},$$
 where for integer $p$, $[p]_q$ is the \emph{$q$-number}, $[p]_q = 1 + q + q^2 + \dots + q^{p-1}$. Define the \emph{$q$-factorial} $[p]_q! = [1]_q\cdot[2]_q\cdots[p]_q$. We find
 $$
 |A_n| = (-1)^{n(n-1)/2}[n]_{-q}!\prod_{i=1}^{n-2}|B_{i}|.
 $$
 Then, notice that $[n]_{-q}!$ is $0$ if and only if $q=1$ and $n>1$ (when $q$ runs over real numbers). Finish the proof with a simple recurrence argument on $|B_{i}|$.
 \end{proof}

\printbibliography

@article{Sta72,
	Author = {Stanley, R.},
	Date-Added = {2021-11-22 01:16:34 +0100},
	Date-Modified = {2021-11-22 01:17:41 +0100},
	Journal = {Memoirs of the American Math. Soc.},
	Title = {Ordered structures and partitions},
	Volume = {119},
	Year = {1972}}

@article{GriVas21,
	Author = {Grinberg, D. and Vassilieva, E.A.},
	Date-Added = {2021-11-21 20:19:48 +0100},
	Date-Modified = {2021-11-21 20:27:28 +0100},
	Journal = {S{\'e}min. Loth. de Comb.},
	Title = {Weighted posets and the enriched monomial basis of QSym},
	Volume = {85B (FPSAC 2021)},
	Year = {2021},
    Note = {arXiv:2202.04720v1}}

@article{Hof15,
	Author = {M. E. Hoffman},
	Date-Added = {2021-11-21 20:18:21 +0100},
	Date-Modified = {2021-11-21 20:31:50 +0100},
	Journal = {Kyushu J. Math.},
	Pages = {345--366},
	Title = {Quasi-symmetric functions and mod p multiple harmonic sums},
	Volume = {69},
	Year = {2015},
	Note = {arXiv:math/0401319v3}}

@misc{GriRei20,
	Author = {Grinberg, D. and Reiner, V.},
	Date-Added = {2020-10-15 11:15:43 +0200},
	Date-Modified = {2020-10-15 11:20:01 +0200},
	Note = {arXiv:1409.8356v7},
	Title = {{H}opf Algebras in Combinatorics},
	Url = {http://www.cip.ifi.lmu.de/~grinberg/algebra/HopfComb-sols.pdf},
	Year = {2020},
	Bdsk-Url-1 = {http://www.cip.ifi.lmu.de/~grinberg/algebra/HopfComb-sols.pdf}}

@unpublished{Hsi07,
	Author = {Hsiao, S. K.},
	Date-Added = {2020-10-15 11:01:51 +0200},
	Date-Modified = {2020-10-15 11:06:48 +0200},
	Title = {Structure of the peak Hopf algebra of quasisymmetric functions},
	Year = {2007}}

@article{Ste97,
	Author = {Stembridge, J.},
	Date-Added = {2020-07-31 18:45:46 +0200},
	Date-Modified = {2020-07-31 18:47:28 +0200},
	Journal = {Trans. Amer. Math. Soc.},
	Number = {2},
	Pages = {763--788},
	Title = {Enriched $P$-partitions.},
	Volume = {349},
	Year = {1997}}

@book{Sta01,
	Author = {Stanley, R.},
	Date-Added = {2016-10-29 10:29:21 +0000},
	Date-Modified = {2019-03-31 20:38:46 +0000},
	Publisher = {Cambridge University Press},
	Title = {Enumerative combinatorics},
	Volume = {2},
	Year = {2001}}

@article{Ges84,
	Author = {Gessel, I.},
	Date-Added = {2016-10-29 10:25:44 +0000},
	Date-Modified = {2019-03-31 20:34:43 +0000},
	Journal = {Contemporary Mathematics},
	Pages = {289--317},
	Title = {Multipartite {P}-partitions and inner products of skew {S}chur functions},
	Volume = {34},
	Year = {1984}}

\end{document}